\title{Feature Selection For High-Dimensional Clustering}
\author{Martin Azizyan \qquad Aarti Singh \qquad Larry Wasserman \\ Carnegie Mellon University}
\let\hat\widehat
\let\tilde\widetilde
\newtheorem{theorem}{Theorem}
\newtheorem{lemma}[theorem]{Lemma}
\newenvironment{proof}{{\bf Proof.}}{$\Box$}
\newenvironment{enum}{
\begin{enumerate}
  \setlength{\itemsep}{1pt}
  \setlength{\parskip}{0pt}
  \setlength{\parsep}{0pt}
}{\end{enumerate}}
\begin{document}

\maketitle

\begin{abstract}
We present a nonparametric method for selecting informative
features in high-dimensional clustering problems.
We start with a screening step that uses a test for multimodality.
Then we apply kernel density estimation and mode clustering to the selected features.
The output of the method consists of a list of relevant features, and cluster assignments.
We provide explicit bounds on the error rate of the resulting clustering.
In addition, we provide the first error bounds on mode based clustering.
\end{abstract}

\section{Introduction}

There are many methods
for feature selection in 
high-dimensional classification
and regression.
These methods require
assumptions such as
sparsity and incoherence.
Some methods (Fan and Lv 2008) also assume 
that relevant variables
are detectable through marginal correlations.
Given these assumptions,
one can prove guarantees for the performance
of the method.

A similar theory for feature selection in clustering is lacking.
There exist a number of methods
but they do not come with precise assumptions and guarantees.
In this paper we propose a method
involving two steps:
\begin{enum}
\item A screening step to eliminate uninformative features.
\item A clustering step based on estimating the modes
of the density of the relevant features.
The clusters are the basins of attraction of the modes (defined later).
\end{enum}

The screening step uses
a multimodality test such as
the dip test from
Hartigan and Hartigan (1985) or
the excess-mass test in Chan and Hall (2010).
We test the marginal distribution of each feature
to see if it is multimodal.
If not, that feature is declared to be uninformative.
The clustering is then based on mode estimation 
using the informative features.

{\em Contributions.}
We present a method for variable selection in clustering,
and an analysis of the method.
Of independent interest, we provide the first risk bounds
on the clustering error of mode-based clustering.

{\em Related Work.}
Witten-Tibshirani (2010)
propose a penalized version of $k$-means clustering,
Raftery-Dean (2006) use a mixture model with a BIC penalty,
Pan-Shen (2007)
use a mixture model with a sparsity penalty
and
Guo-Levina-Michailidis (2010)
use a pairwise fusion penalty.
None of these papers provide theoretical guarantees.
Sun-Wang-Fang (2012)
propose a $k$-means method with a penalty on the cluster means.
They do provide some consistency guarantees
but only assuming that the number of clusters $k$ is known.
Their notion of non-relevant features is different than ours;
specifically, a non-relevant feature has cluster center equal to 0.
Furthermore, their guarantees are of a different nature in that 
they show consistency of the regularized k-means objective (which is NP-hard), 
and not the 
iterative algorithm.

{\em Notation:}
We let $p$ denote a density function,
$g$ its gradient and $H$ its Hessian.
A point $x$ is a {\em local mode} of $p$ if
$||g(x)||=0$, where throughout the paper $\|\cdot\|$ denotes
the euclidean norm, and all the eigenvalues
of $H(x)$ are negative.
In general, the eigenvalues of
a symmetric matrix $A$ are denoted by
$\lambda_1\geq \lambda_2\geq \cdots$.
We write $a_n \preceq b_n$ to mean that there is some $C>0$
such that $a_n \leq C b_n$ for all large $n$. $C,c$ will denote different constants. 
We use $B(x,\epsilon)$ to denote a closed ball of radius
$\epsilon$ centered at $x$.

\section{Mode Clustering}

Here we give a brief review of mode clustering, also called mean-shift clustering;
more details can be found in Cheng (1995), Comaniciu and Meer (2002),
Arias-Castro, Mason, Pelletier (2014) and Chacon (2012).

Let $X_1,\ldots, X_n \in \mathbb{R}^d$
be random vectors
drawn from a distribution $P$
with density $p$.
We write
$X_i = (X_i(1),\ldots, X_i(d))^T$
to denote the $d$ features of observation $X_i$.
We assume that $p$ has a finite set of modes 
${\cal M}=\{m_1,\ldots, m_k\}$.
The population clustering associated with $p$ is
${\cal C} = \{ {\cal C}_1,\ldots, {\cal C}_k\}$
where ${\cal C}_j$ is the basin of attraction
of $m_j$.
That is, $x\in {\cal C}_j$ if the gradient ascent curve, or flow, 
starting at $x$ ends at $m_j$.
More precisely, the {\em flow}
starting at $x$ is the path
$\pi_x: \mathbb{R}\to \mathbb{R}^d$
satisfying $\pi_x(0) =x$ and
$\pi_x'(t) = \nabla p(\pi_x(t))$.
Then $x\in {\cal C}_j$ iff
$\lim_{t\to\infty}\pi_x(t)=m_j$.
Let $m(x)\in {\cal M}$ denote the mode to which $x$ is assigned.
Thus $m: \mathbb{R}^d \to {\cal M}$.
Define the clustering function
$c:\mathbb{R}^d \times \mathbb{R}^d\to \{0,1\}$ by
$$
c(x,y) =
\begin{cases}
1 & {\rm if\ } m(x) = m(y)\\
0 & {\rm if\ } m(x) \neq m(y).
\end{cases}
$$
Thus, $c(x,y)=1$ if and only if $x$ and $y$ are in the same cluster.

Now let $\hat p$ be an estimate of the density $p$
with corresponding estimated modes
$\hat{\cal M}= \{\hat m_1,\ldots, \hat m_\ell\}$,
mode assignment function $\hat m$,
and basins
$\hat{\cal C} = \{ \hat {\cal C}_1,\ldots, \hat {\cal C}_\ell\}$.
(The modes and cluster assignments can be found numerically
using the mean shift algorithm; see Cheng (1995) and
Comaniciu and Meer (2002).)
This defines a sample cluster function $\hat c$.
The clustering loss is defined to be
\begin{equation}
L = \frac{1}{\binom{n}{2}}\sum_{j < k} I \Bigl( \hat c(X_j,X_k)\neq c(X_j,X_k)\Bigr).
\end{equation}
A second loss function is the Hausdorff distance 
$H(\hat{\cal M},{\cal M})$
where
$$
H(C,D) = \inf \{\epsilon: C \subset D\oplus \epsilon\ {\rm and}\ 
D \subset C\oplus \epsilon\}
$$
and
$A\oplus\epsilon =\cup_{x\in A}B(x,\epsilon)$.

\section{The Method}

Now we describe the steps of our algorithm.

\rule{6in}{1mm}

\noindent
\begin{enumerate}
\item 
(Screening)
Let $p_j$ be the marginal density of the $j^{\rm th}$ feature.
Let $k_j$ be the number of modes of $p_j$.
We test
$$
H_0: k_j \leq 1 \ \ \ \ \ {\rm versus}\ \ \ H_1: k_j >1.
$$
The test is given in Figure \ref{fig::test}.
Let $R = \{j:\ H_0\ {\rm was\ rejected}\}$ and let
$r =|R|$.
\item 
(Mode Clustering)
Let
$Y_i = (X_i(a):\ a\in R)$
be the relevant coordinates of $X_i$.
Estimate the density of $Y$
with the kernel density estimator
$$
\hat p_h(y) = \frac{1}{n}\sum_{i=1}^n \frac{1}{h^r}
K\left(\frac{||Y_i-y||}{h}\right)
$$
with bandwidth $h$.
Let $\hat {\cal M}$ be the modes
corresponding
to $\hat p_h$
with corresponding basins
$\hat{\cal C} = \{ \hat {\cal C}_1,\ldots, \hat {\cal C}_\ell\}$
and cluster function
$\hat c$.
\item Output 
$\hat {\cal M}$, $R$,
$\hat m(Y_1),\ldots, \hat m(Y_n)$ and $\hat c$.
\end{enumerate}

\rule{6in}{1mm}


\begin{figure}
\centering
\fbox{\parbox{5.3in}{
\begin{center}
{\sf Test For Multi-Modality}
\end{center}
\begin{enumerate}
\item Fix $0 < \alpha < 1$. Let $\tilde\alpha = \alpha /(nd)$.
\item For each $1\leq j \leq d$,
compute $T_j = {\rm Dip}(F_{nj})$ where
$F_{nj}$ is the empirical distribution function of the $j^{\rm th}$ feature
and ${\rm Dip}(F)$ is defined in (\ref{eq::dip}).
\item Reject the null hypothesis that feature $j$ is not multimodal if
$T_j > c_{n,\tilde\alpha}$ where
$c_{n,\tilde\alpha}$ is the critical value for the dip test.
\end{enumerate}
}}
\caption{The multimodality test for the screening step.}
\label{fig::test}
\end{figure}

\subsection{The Multimodality Test}

Any test of multimodality may be used.
Here we describe the {\em dip test}
(Hartigan and Hartigan, 1985).
Let
$Z_1,\ldots, Z_n \in [0,1]$
be a sample from a distribution $F$.
We want to test
``$H_0: F$ is unimodal''
versus
``$H_1: F$ is not unimodal.''
Let ${\cal U}$ be the set of unimodal
distributions.
Hartigan and Hartigan (1985) define
\begin{equation}\label{eq::dip}
{\rm Dip}(F) = \inf_{G\in {\cal U}}\sup_x | F(x) - G(x)|.
\end{equation}
If $F$ has a density $p$ we also write
${\rm Dip}(F)$ as
${\rm Dip}(p)$.
Let $F_n$ be the empirical distribution function.
The dip statistic is
$T_n = {\rm Dip}(F_n)$.
The dip test rejects $H_0$ if
$T_n > c_{n,\alpha}$
where the critical value
$c_{n,\alpha}$ is 
chosen so that, under $H_0$,
$\mathbb{P}(T_n > c_{n,\alpha}) \leq \alpha$.\footnote{
Specifically, 
$c_{n,\alpha}$ can be defined by
$\sup_{G\in {\cal U}} P_G(T_n > c_{n,\alpha}) = \alpha$.
In practice,
$c_{n,\alpha}$ can be defined by
$P_U(T_n > c_{n,\alpha}) = \alpha$ where 
$U$ is Unif(0,1).
Hartigan and Hartigan (1985) suggest that this suffices asymptotically.}

Since we are conducting multiple tests,
we cannot test at a fixed error rate $\alpha$.
Instead, we replace $\alpha$ with
$\tilde\alpha = \alpha/(nd)$.
That is, we test each marginal and
we reject $H_0$ if
$T_n > c_{n,\tilde\alpha}$.
By the union bound,
the chance of at least one false rejection of $H_0$ is at most
$d\tilde\alpha = \alpha/n$.

There are more refined tests such as the excess mass test
given in Chan and Hall (2010),
building on work by Muller and Sawitzki (1991).
For simplicity, we use the dip test in this paper;
a fast implementation of the test is available in R.

\subsection{Bandwidth Selection}

Bandwidth selection for kernel density estimation is an enormous topic.
A full investigation of bandwidth selection in mode clustering
is beyond the scope of this paper but here we provide some
general guidance.
We may want to choose a bandwidth that gives accurate
estimates of the gradient of the density.
Based on Wand, Duong, and Chacon (2011) this suggests
$h_n = S \left(\frac{4}{r+4}\right)^{1/(6+r)} n^{-1/(6+r)}$
where $S$ is the average of the sample standard deviations along each coordinate.
On the other hand,
in the low noise case (well-separated clusters)
we may want to choose an $h>0$
that does not go to 0 as $n$ increases.
Inspired by similar ideas used in RKHS methods 
(Sriperumbudur et al. 2009)
one possibility is to take $h$ to be the 0.05 quantile
of the values $||Y_i - Y_j||$.
Finally, we note that Einbeck (2011)
has a heuristic method for choosing $h$ for mode clustering.

\section{Theory}

\subsection{Assumptions}

We make the following assumptions:

\noindent
{\bf (A1) (Smoothness)} $p$ has three bounded, continuous derivatives.
Thus, $p\in C^3$.
Also, $p$ is supported on a compact set
which we take to be a subset of
$[0,1]^d$.

\noindent
{\bf (A2) (Modes)} $p(y)$ has finitely many modes
${\cal M} = \{m_1,\ldots, m_k\}$
where $y\in\mathbb{R}^s$ is the subset of $x$ defined in (A3).
Furthermore, $p$ is a Morse function, i.e.
the Hessian at each critical point is non-degenerate.
Also, there exists $a>0$ such that
$\min_{j\neq \ell}||m_j - m_\ell|| \geq a$.
Finally,
there exits $0 < b < B < \infty$ 
and $\gamma>0$
such that,
\begin{equation}
-B \leq  \min_j \lambda_{s}(H(y)) \leq
\max_j \lambda_{1}(H(y)) \leq -b
\end{equation}
for all $y\in B(m_j,\gamma)$ and 
$1 \leq j \leq k$.

\noindent
{\bf (A3) (Sparsity)} The true cluster function $c$
depends only on a subset of features 
$S\subset\{1,\ldots, d\}$ of size $s$.
Let $y=(x(i):\ i\in s)$ denote the relevant features.

\noindent
{\bf (A4) (Marginal Signature)}
If $j\in S$, then the marginal density $p_j$
is multimodal.
In particular, 
\begin{equation}
\min_{j\in S}{\rm Dip}(p_j) > \sqrt{ \frac{2c_n\log(2nd)}{n}}
\end{equation}
where $c_n$ is any slowly increasing function of $n$
(such as $\log n$ or $\log \log n$)
and
\begin{equation}
{\rm Dip}(p) = \inf_{q\in {\cal U}}\sup_x |F_p(x) - F_q(x)|
\end{equation}
where $F_p(x) = \int_{-\infty}^x p(u) du$
and ${\cal U}$ is the set of unimodal distributions.

\noindent
{\bf (A5) (Cluster Boundary Condition)}
Define the {\em cluster margin}
\begin{equation}\label{eq::boundary}
\Omega_\delta = \Biggl(\bigcup_{j=1}^k (\partial {\cal C}_j)\Biggr)\oplus \delta
\end{equation}
where
$\partial {\cal C}_j$ is the boundary of ${\cal C}_j$
and $A\oplus \delta = \bigcup_{y\in A}B(y,\delta)$.
We assume that there exists $c>0$ and $\beta\geq 1$ such that,
 for all small $\delta>0$,
$P(\Omega_\delta)\leq c \delta^\beta.$

\subsection{Discussion of the Assumptions}

Assumption (A1) is a standard smoothness assumption.
Assumption (A2) is needed to make sure that
the modes are well-defined and estimable.
Similar assumptions appear in 
Arias-Castro, Mason, Pelletier (2014)
and Romano (1988), for example.
Assumption (A3) is needed in the high-dimensional setting
just as in high-dimensional regression.

Assumption (A4) is the most restrictive assumption.
The assumption is violated when clusters are very close together
and are not well-aligned with the axes.
To elucidate this assumption, consider Figure \ref{fig::marginal1}.
The left plots show a violation of (A4).
The middle and right plots show cases where the assumption holds.
It may be possible to relax (A4) but, as far as we know,
every variable selection method for clustering in high dimensions
makes a similar assumption
(although it is not always made explicit).

Assumption (A5) is satisfied with $\beta=1$
for any bounded density with cluster  
boundaries are not space-filling curves.
The case $\beta >1$ corresponds to well-separated clusters.
This implies that there is not too much mass at the cluster boundaries.
This can be thought of as a cluster version of
Tsybakov's low noise assumption in classification
(Audibert and Tsybakov, 2007).
In particular,
the very well-separated case, where 
there is no mass
right on the cluster boundaries,
corresponds to $\beta = \infty$.

\begin{figure}
\begin{center}
\includegraphics[scale=.3]{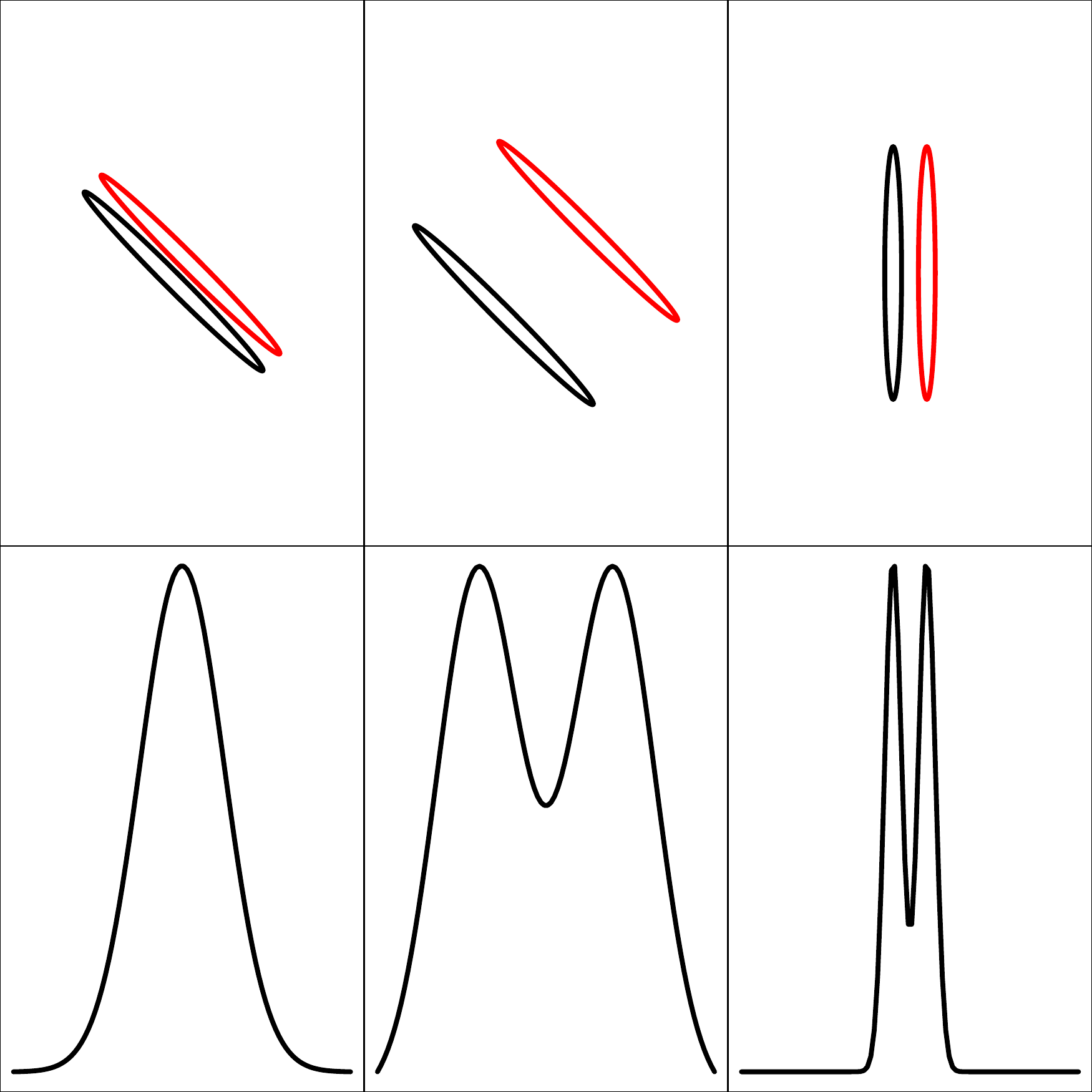}
\end{center}
\caption{\em
Three examples, each showing two clusters and two features $X(1)$ and $X(2)$.
The top plots show the clusters.
The bottom plots show the marginal density of $X(1)$.
Left: The marginal fails to reveal any clustering structure.
This example violates the marginal signature assumption.
Middle:
The marginal is multimodal and hence correctly identifies $X(1)$ as a relevant feature.
This example satisfies the marginal signature assumption.
Right:
In this case, $X(1)$ is relevant but $X(2)$ is not.
Despite the fact that the clusters are close together,
the marginal is multimodal and hence correctly identifies $X(1)$ as a relevant feature.
This example satisfies the marginal signature assumption.}
\label{fig::marginal1}
\end{figure}

\subsection{Main Result}

\begin{theorem}
\label{thm::main}
Assume (A1)-(A5).
Then
$\mathbb{P}(R=S) > 1 - 2/n$.
Furthermore, we have the following:
\begin{enumerate}
\item
Let
$\eta_j = \sup_x||\hat p_h^{(j)}(x) - p^{(j)}(x)||$
where $p^{(j)}$ denotes the $j^{th}$ derivative of the density.
The cluster loss is bounded by
\begin{equation}
\mathbb{E}[L]  \leq
e^{-nc h^{s+4}} +
\left(\frac{C_1}{\log\left(\frac{C_2}{\eta_1}\right)}\right)^\beta + \frac{2}{n} 
\end{equation}
where
$\eta_1 \preceq h^2 + \sqrt{\frac{\log n}{n h^{s+2}}}$.
Choosing $h_n \asymp n^{-b}$ for
$0 < b < 1/(4+s)$, we have
\begin{equation}
\mathbb{E}[L] \preceq
e^{-c n^{\omega}} +
\left(\frac{1}{\log n}\right)^\beta
\end{equation}
where $\omega = 1 - b(s+4) > 0$ and $\beta$ is a constant.
\item
(Low noise and fixed bandwidth.)
Suppose 
that
$\beta \succeq \frac{2 v \log n}{\log\log (1/h^2)}.$
If $0 < h < Ca$ then
$\mathbb{E}[L]\preceq n^{-v} + c_1 e^{-n c}.$
\item Except on a set of probability at most $O(e^{-nc h^s})$,
$$
H(\hat{\cal M},{\cal M}) \preceq \sqrt{ h^2 + \sqrt{\frac{\log n}{n h^s}}}.
$$
Hence, if $h>0$ is fixed but small, the for any $K$ and large enough $n$,
$H(\hat{\cal M},{\cal M}) < \min_{j\neq k}||m_j-m_k||/K$.
If $h\asymp n^{-1/(4+s)}$, then
$H(\hat{\cal M},{\cal M}) = O( (\log n)^{1/2}/n^{\frac{1}{4+s}})$.
\end{enumerate}
\end{theorem}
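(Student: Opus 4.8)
The plan is to handle the screening claim and the three numbered items separately, reusing a common stock of uniform deviation bounds. I begin with $\mathbb{P}(R=S)>1-2/n$ by splitting $\{R\neq S\}$ into false inclusions and false exclusions. For false inclusions, $H_0$ holds for every $j\notin S$ (their marginals are unimodal), so the union bound over the $d$ level-$\tilde\alpha$ tests already caps their total false-rejection probability at $d\tilde\alpha=\alpha/n$, as noted in the text. For false exclusions I would use that ${\rm Dip}(\cdot)$ is a distance from $F$ to the set $\mathcal{U}$ and hence $1$-Lipschitz in sup-norm, so $|{\rm Dip}(F_{nj})-{\rm Dip}(F_j)|\leq \|F_{nj}-F_j\|_\infty$. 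Combining the Dvoretzky--Kiefer--Wolfowitz bound $\mathbb{P}(\|F_{nj}-F_j\|_\infty>\epsilon)\leq 2e^{-2n\epsilon^2}$ with a union bound over $j\in S$, and noting $c_{n,\tilde\alpha}\asymp\sqrt{\log(nd)/n}$, the margin guaranteed by (A4) with its slowly growing factor $c_n$ forces ${\rm Dip}(F_{nj})>c_{n,\tilde\alpha}$ for all $j\in S$ off an event of probability $\leq 1/n$. Adding the two pieces gives $\mathbb{P}(R\neq S)\leq \alpha/n+1/n\leq 2/n$.

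Next I would condition on $R=S$, so $\hat p_h$ is built from the correct $s$-dimensional features, and bring in uniform concentration for the KDE and its derivatives: writing $\eta_j$ for the sup-norm error of the $j$-th derivative, one has $\eta_j\preceq h^2+\sqrt{\log n/(nh^{s+2j})}$, and the probability that $\eta_j$ exceeds a fixed constant is at most $Ce^{-cnh^{s+2j}}$ (this is the source of the $e^{-nch^s}$ and $e^{-nch^{s+4}}$ terms, for $j=0$ and $j=2$). For the Hausdorff bound (item 3) I would use the quadratic decay near a mode from (A2): locally $p(m)-p(x)\geq \tfrac{b}{2}\|x-m\|^2$, while $\hat p(\hat m)\geq \hat p(m)\geq p(m)-\eta_0$ and $\hat p(\hat m)\leq p(\hat m)+\eta_0$; comparing gives $\|\hat m-m\|\preceq \sqrt{\eta_0}=\sqrt{h^2+\sqrt{\log n/(nh^s)}}$ on the event $\{\eta_0\leq{\rm const}\}$ of probability $\geq 1-O(e^{-nch^s})$. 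Non-degeneracy in (A2) together with small $\eta_1,\eta_2$ rules out spurious and merged modes, putting $\hat{\mathcal{M}}$ and $\mathcal{M}$ in bijection; the two corollaries follow by letting the variance term vanish (fixed $h$) or balancing bias and variance ($h\asymp n^{-1/(4+s)}$).

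For the clustering loss (item 1), exchangeability gives $\mathbb{E}[L]=\mathbb{P}(\hat c(X_1,X_2)\neq c(X_1,X_2))$, and I would decompose over $\{R\neq S\}$ (contributing $\leq 2/n$), the bad-Hessian event $\{\eta_2>{\rm const}\}$ (bounding the loss by $1$, contributing $\leq e^{-nch^{s+4}}$), and the good event. On the good event the decisive step is to show a pair is misclassified only when one point lies in the cluster margin $\Omega_\delta$ for a width $\delta=\delta(\eta_1)$. This is the main obstacle: I would compare the true flow driven by $g=\nabla p$ with the perturbed flow driven by $\hat g=\nabla\hat p_h$, where $\|\hat g-g\|_\infty\leq\eta_1$, near the boundaries $\partial\mathcal{C}_j$, which are the stable manifolds of the saddles of $p$. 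Because the flow is hyperbolic there, trajectories off the separatrix separate exponentially over an escape time scaling like $\log(1/\delta)$, and bounding how far the estimated separatrix can move from the true one yields the logarithmic relation $\delta\asymp C_1/\log(C_2/\eta_1)$. Granting this, (A5) bounds the margin mass by $c\delta^\beta$, and a union bound over the two points gives a contribution $\preceq (C_1/\log(C_2/\eta_1))^\beta$; summing the three pieces produces the stated bound.

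Finally I would substitute $\eta_1\preceq h^2+\sqrt{\log n/(nh^{s+2})}$. For item 1, taking $h\asymp n^{-b}$ with $0<b<1/(4+s)$ makes $\eta_1$ polynomially small, so $\log(C_2/\eta_1)\asymp\log n$ and the margin term becomes $(\log n)^{-\beta}$, while $e^{-nch^{s+4}}=e^{-cn^{1-b(s+4)}}=e^{-cn^{\omega}}$ with $\omega=1-b(s+4)>0$. For item 2 (fixed $h<Ca$, so that $p_h:=\mathbb{E}\hat p_h$ preserves the cluster structure of $p$), once $n$ is large the variance is dominated by the bias, giving $\eta_1\asymp h^2$ and $\delta\asymp 1/\log(C_2/h^2)$; then $\delta^\beta=\exp(-\beta\log\log(C_2/h^2))$, so the hypothesis $\beta\succeq 2v\log n/\log\log(1/h^2)$ forces $\delta^\beta\preceq n^{-v}$, while $e^{-nch^{s+4}}$ is just $c_1e^{-nc}$ for constant $h$, yielding $\mathbb{E}[L]\preceq n^{-v}+c_1e^{-nc}$. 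The single genuinely delicate ingredient is the flow-perturbation estimate behind $\delta(\eta_1)$; everything else is concentration plus bookkeeping.
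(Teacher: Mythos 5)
Your overall architecture coincides with the paper's: you handle screening by the $1$-Lipschitz property of ${\rm Dip}$ in sup-norm plus Dvoretzky--Kiefer--Wolfowitz and a union bound, with $c_{n,\tilde\alpha}\asymp\sqrt{\log(nd)/n}$ (the paper packages the same argument in its two false-negative lemmas, getting the critical value asymptotics from Theorem 3 of Hartigan--Hartigan and a Brownian bridge extension); you decompose $\mathbb{E}[L]$ into the screening-failure event, the KDE-concentration-failure event (source of $e^{-nch^{s+4}}$), and the margin mass $P(\Omega_\delta)\leq c\delta^\beta$, exactly as in the paper's proof of the main theorem; your quadratic-comparison argument for the Hausdorff bound, including ruling out spurious and merged modes, is essentially the paper's Lemma \ref{lemma::prop1}; and your bookkeeping for the two bandwidth regimes matches (your treatment of item 2 is in fact more explicit than the paper's one-sentence version).

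The one place you genuinely diverge --- and where your proposal has a real gap --- is the central lemma asserting that a point at distance greater than $\delta=C_1/\log(C_2/\eta_1)$ from the cluster boundary is assigned to the correct estimated mode. The paper proves this (Lemmas \ref{lemma::rightmode} and \ref{lemma::boundary}, then Lemmas \ref{lemma::prop2} and \ref{lemma::prop3} to convert mode-tracking into pairwise cluster agreement) by bounding the travel time of the true flow to a neighborhood of its mode by $t_\epsilon \preceq 1/\|g(x_0)\|$ plus logarithmic terms, using a gradient lower bound $\|g(x)\|\succeq 1/\log(C_2/\eta_1)$ off $\Omega_\delta$, and then invoking the Gronwall-type tracking bound of Arias-Castro et al.\ in which the perturbed flow deviates by at most a constant times $\eta_1 e^{\kappa t}$; the requirement that $\eta_1 e^{\kappa t_\epsilon}$ stay bounded with $t_\epsilon\preceq\log(C_2/\eta_1)$ is precisely what forces the logarithmic choice of $\delta$. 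Your replacement --- hyperbolicity near the separatrices with escape time of order $\log(1/\delta)$ --- does not yield the relation you claim: plugging an escape time of order $\log(1/\delta)$ into an exponential deviation factor gives a drift of order $\eta_1\delta^{-L}$, hence a \emph{polynomial} relation $\delta\asymp\eta_1^{1/(L+1)}$, not $\delta\asymp 1/\log(1/\eta_1)$. The discrepancy is not fatal for the stated theorem, since a smaller misclassification margin only strengthens the bound ($\Omega_{\delta'}\subset\Omega_\delta$ for $\delta'\leq\delta$), and if carried out rigorously your route would actually prove something sharper than the paper does. But as written you neither derive the polynomial relation nor the logarithmic one: the step is introduced with ``granting this,'' and its stated mechanism is inconsistent with its stated conclusion. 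Since this lemma is what makes the decomposition in item 1 close, the proposal is incomplete exactly where the paper's proof does its real work.
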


The first result shows that
the clustering error depends on the number of relevant variables $s$
and on the boundary exponent $\beta$.
The second result shows that in the low noise (large $\beta$) case,
we can use a small but non-vanishing bandwidth.
In that case, the clustering error for all pairs of points
not near the boundary is exponentially small and
the fraction of points near the boundary decreases as a polynomial in $n$.
The third result shows that
the Hausdorff distance between the estimated modes and true modes
is small relative to the mode separation
with high probability even if $h$ does not tend to 0.
When $h$ does tend to 0,
the Hausdorff distance shrinks at rate
$O( (\log n)^{1/2}/n^{\frac{1}{4+s}})$.

\section{Proofs}
\vspace{-0.1in}
\subsection{Screening}

\begin{lemma}
[False Negative Rate of the dip test.]
Let $T_n$ be the dip statistic.
Let $\delta = {\rm Dip}(p)$.
Suppose that $\sqrt{n}\delta \to \infty$.
Then
$\mathbb{P}(T_n \leq c_{n,\alpha}) < 2 e^{- n\delta^2/2}$.
\end{lemma}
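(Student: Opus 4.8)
The plan is to bound the dip statistic below by the population dip minus the Kolmogorov fluctuation of the empirical process, then to show that the critical value is negligible compared to $\delta$, and finally to apply the Dvoretzky--Kiefer--Wolfowitz (DKW) inequality twice.

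First I would record the key structural fact that the functional ${\rm Dip}$ is $1$-Lipschitz with respect to the Kolmogorov distance $\|F-G\|_\infty = \sup_x|F(x)-G(x)|$. Indeed, for any CDFs $F,G$ and any $U\in{\cal U}$, the triangle inequality gives $\sup_x|F(x)-U(x)| \leq \|F-G\|_\infty + \sup_x|G(x)-U(x)|$; taking the infimum over $U\in{\cal U}$ yields ${\rm Dip}(F)\leq \|F-G\|_\infty + {\rm Dip}(G)$, and by symmetry $|{\rm Dip}(F)-{\rm Dip}(G)|\leq \|F-G\|_\infty$. Applying this with $F=F_n$ and $G=F_p$ (the population CDF, so that ${\rm Dip}(F_p)=\delta$) gives
\begin{equation}
T_n = {\rm Dip}(F_n) \geq \delta - \|F_n - F_p\|_\infty.
\end{equation}

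Next I would show that the critical value shrinks like $n^{-1/2}$. Under any unimodal $G\in{\cal U}$ we have ${\rm Dip}(G)=0$, so ${\rm Dip}(F_n)\leq \|F_n-G\|_\infty$; hence by DKW, $\mathbb{P}_G(T_n > t) \leq \mathbb{P}_G(\|F_n-G\|_\infty > t)\leq 2e^{-2nt^2}$. Since this holds uniformly over ${\cal U}$, the smallest critical value satisfying $\sup_{G\in{\cal U}}\mathbb{P}_G(T_n>c_{n,\alpha}) = \alpha$ obeys $c_{n,\alpha}\leq \sqrt{\log(2/\alpha)/(2n)}$. Because $\sqrt{n}\,\delta\to\infty$ by hypothesis, $c_{n,\alpha}/\delta \to 0$, so for all large $n$ we have $c_{n,\alpha}\leq \delta/2$.

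Finally I would combine these. For large $n$,
\begin{equation}
\{T_n \leq c_{n,\alpha}\} \subseteq \{\delta - \|F_n-F_p\|_\infty \leq c_{n,\alpha}\} \subseteq \Bigl\{\|F_n-F_p\|_\infty \geq \tfrac{\delta}{2}\Bigr\},
\end{equation}
and one more application of DKW gives $\mathbb{P}(T_n\leq c_{n,\alpha}) \leq 2e^{-2n(\delta/2)^2} = 2e^{-n\delta^2/2}$, as claimed. The main obstacle is the second step: controlling the critical value. The clean bound $c_{n,\alpha}=O(n^{-1/2})$ relies on using the conservative definition $\sup_{G\in{\cal U}}\mathbb{P}_G(T_n>c_{n,\alpha})=\alpha$ together with the uniform DKW bound over unimodal $G$; if one instead calibrated against a specific reference distribution, a separate argument would be needed to show its $\alpha$-quantile is $O(n^{-1/2})$. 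Everything else is a direct consequence of the Lipschitz property and the two invocations of DKW.
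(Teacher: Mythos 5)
Your proof is correct, and its skeleton matches the paper's: show the critical value is $O(n^{-1/2})$ so that $\{T_n \leq c_{n,\alpha}\}$ forces $T_n \leq \delta/2$, convert that into $\|F_n - F\|_\infty \geq \delta/2$ via the triangle inequality for the dip functional, and finish with DKW. The one genuinely different step is how you control $c_{n,\alpha}$. The paper simply cites Theorem 3 of Hartigan and Hartigan (1985) for the asymptotic statement $c_{n,\alpha} \sim C/\sqrt{n}$; you instead derive a non-asymptotic bound $c_{n,\alpha} \leq \sqrt{\log(2/\alpha)/(2n)}$ from first principles, observing that ${\rm Dip}(G)=0$ for every $G \in {\cal U}$, so the Lipschitz property gives $T_n \leq \|F_n - G\|_\infty$ under the null and DKW applies uniformly over ${\cal U}$. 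This buys self-containedness and explicit constants, but — as you correctly flag — it only works for the conservative calibration $\sup_{G\in{\cal U}}\mathbb{P}_G(T_n > c_{n,\alpha}) = \alpha$; for the Unif$(0,1)$ calibration mentioned in the paper's footnote (and used in practice), one still needs the Hartigan--Hartigan asymptotics, which is exactly what the paper's citation supplies. A further small improvement on your side: your Lipschitz formulation $|{\rm Dip}(F)-{\rm Dip}(G)| \leq \|F-G\|_\infty$ avoids the paper's implicit assumption that nearest unimodal distributions $F_0$ and $\hat F_0$ are attained, since you take infima directly rather than minimizers.
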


\begin{proof}
It follows from Theorem 3 of Hartigan and Hartigan (1985)
that $c_{n,\alpha} \sim C/\sqrt{n}$ for some $C>0$.
Since $\sqrt{n}\delta \to \infty$,
we have that the event $\{T_n \leq c_{n,\alpha}\}$
implies the event
$\{T_n \leq \delta/2\}$.
Let $F_0$ be the member of ${\cal U}$ closest to $F$ and let
$\hat F_0$ be the member of ${\cal U}$ closest to $F_n$.
Then
$T_n \leq \delta/2$ implies that
$$
\delta < \sup_x |F(x) - F_0(x)| \leq
\sup_x |F(x) - \hat F_0(x)| \leq
\sup_x |F(x) - F_n| + 
\sup_x |F_n - \hat F_0(x)|  \leq
\sup_x |F(x) - F_n| + \frac{\delta}{2}
$$
and so
$\sup_x |F(x) - F_n| > \delta/2$.
In summary,
the event
$\{T_n \leq c_{n,\alpha}\}$ implies the event
$\{\sup_x |F(x) - F_n| > \delta/2\}$.
According to the Dvoretsky-Kiefer-Wolfowitz theorem,
$\mathbb{P}( \sup_x |F(x) - F_n| > \epsilon)\leq 2 e^{-2n\epsilon^2}.$
Hence,
$\mathbb{P}(T_n \leq c_{n,\alpha}) \leq
\mathbb{P}( \sup_x |F(x) - F_n| > \delta/2)\leq 2 e^{-n \delta^2/2}.$
\end{proof}

\begin{lemma}
[False negative rate: Multiple Testing Version.]
Recall that $\tilde\alpha = \alpha/(nd)$.
Let $T_n$ be the dip statistic.
Let $\delta = {\rm Dip}(p)$.
Suppose that $\sqrt{n/\log(nd)}\delta \to \infty$.
Then
$\mathbb{P}(T_n \leq c_{n,\tilde\alpha}) < 2 e^{- n\delta^2/2}$.
\end{lemma}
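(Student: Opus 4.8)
The plan is to mirror the single-test argument of the preceding lemma, with the one genuinely new ingredient being a non-asymptotic bound on the critical value at the much smaller level $\tilde\alpha = \alpha/(nd)$. In the preceding lemma the asymptotic $c_{n,\alpha}\sim C/\sqrt{n}$ was enough because $\alpha$ was a fixed constant; here $\tilde\alpha$ shrinks with both $n$ and $d$, so I must instead track how the critical value grows as the level decreases, and it is this growth that produces the extra $\log(nd)$ factor in the signal requirement.

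First I would bound $c_{n,\tilde\alpha}$ directly from its defining relation $\sup_{G\in{\cal U}}P_G(T_n > c_{n,\tilde\alpha}) = \tilde\alpha$. The key observation is that for any unimodal $G\in{\cal U}$, using $G$ itself as a competitor in the infimum defining the dip gives, when the data are drawn from $G$, $T_n = {\rm Dip}(F_n) \leq \sup_x |F_n(x) - G(x)|$. Hence by the Dvoretzky--Kiefer--Wolfowitz inequality, $P_G(T_n > \epsilon) \leq P_G(\sup_x |F_n(x)-G(x)| > \epsilon) \leq 2e^{-2n\epsilon^2}$ uniformly over $G\in{\cal U}$. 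Choosing $\epsilon = \sqrt{\log(2/\tilde\alpha)/(2n)}$ makes the right-hand side equal to $\tilde\alpha$, so that $\sup_{G\in{\cal U}}P_G(T_n>\epsilon)\leq\tilde\alpha$, and since this supremum is non-increasing in the threshold we conclude $c_{n,\tilde\alpha} \leq \sqrt{\log(2/\tilde\alpha)/(2n)} = \sqrt{\log(2nd/\alpha)/(2n)} \asymp \sqrt{\log(nd)/n}$.

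Next I would invoke the hypothesis $\sqrt{n/\log(nd)}\,\delta\to\infty$, which is exactly equivalent to $\delta \gg \sqrt{\log(nd)/n}$. Combined with the bound just obtained, this yields $c_{n,\tilde\alpha}\leq\delta/2$ for all large $n$, so the event $\{T_n\leq c_{n,\tilde\alpha}\}$ is contained in $\{T_n\leq\delta/2\}$. From this point the argument is verbatim that of the preceding lemma: the triangle-inequality chain comparing $F$ to its closest unimodal $F_0$, then to the closest unimodal $\hat F_0$ of $F_n$, then to $F_n$, shows that $\{T_n\leq\delta/2\}$ forces $\sup_x |F(x)-F_n(x)| > \delta/2$, and a second application of DKW gives $\mathbb{P}(T_n\leq c_{n,\tilde\alpha}) \leq 2e^{-2n(\delta/2)^2} = 2e^{-n\delta^2/2}$.

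The main obstacle is the first step, the explicit $\sqrt{\log(nd)/n}$ bound on $c_{n,\tilde\alpha}$, because the limiting-distribution result quoted in the previous lemma no longer applies once the level depends on $n$ and $d$. The resolution is to avoid any asymptotic distribution for the dip statistic altogether and bound the critical value directly through the DKW tail, whose explicit exponential form is precisely what converts the vanishing level $\tilde\alpha = \alpha/(nd)$ into the additional logarithmic factor appearing in the strengthened condition on $\delta$.
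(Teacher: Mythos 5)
Your proof is correct, and it reaches the crucial estimate $c_{n,\tilde\alpha} \preceq \sqrt{\log(nd)/n}$ by a genuinely different route than the paper. The paper's proof outline is asymptotic: it cites Theorem 3 of Hartigan and Hartigan for $c_{n,\alpha}\sim C/\sqrt{n}$ at fixed level, then invokes the Brownian bridge approximation $\sup_{0\le x\le 1}|\sqrt{n}(F_n(x)-x)-B(x)|\to 0$ in probability and asserts that ``a simple extension'' yields $c_{n,\tilde\alpha}\sim\sqrt{\log(nd)/n}$. That extension is in fact the delicate point, since $\tilde\alpha=\alpha/(nd)\to 0$: one needs quantiles of $T_n$ at levels tending to zero, i.e.\ moderate-deviation control of the tails, which weak convergence alone does not supply. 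You sidestep this entirely with a finite-sample bound: under any $G\in{\cal U}$ the dip satisfies $T_n=\inf_{G'\in{\cal U}}\sup_x|F_n(x)-G'(x)|\le \sup_x|F_n(x)-G(x)|$ (take $G$ itself as the competitor), so the distribution-free DKW inequality gives $\sup_{G\in{\cal U}}P_G(T_n>\epsilon)\le 2e^{-2n\epsilon^2}$ uniformly, and hence $c_{n,\tilde\alpha}\le\sqrt{\log(2nd/\alpha)/(2n)}$ with no asymptotics and no Brownian bridge. Both proofs then finish identically: the containment $\{T_n\le c_{n,\tilde\alpha}\}\subset\{T_n\le\delta/2\}$ (using $\delta\gg\sqrt{\log(nd)/n}$), the triangle-inequality chain through the nearest unimodal distributions, and a second application of DKW. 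What your route buys is a self-contained, non-asymptotic argument that is arguably more rigorous than the paper's outline; the only caveat is that your critical-value bound implicitly reads $c_{n,\tilde\alpha}$ as the smallest threshold whose worst-case rejection probability over ${\cal U}$ is at most $\tilde\alpha$, which is the natural reading of the definition in the paper's footnote.
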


{\bf Proof Outline.}
As noted in the proof of the previous lemma,
it follows from Theorem 3 of Hartigan and Hartigan (1985)
that for fixed $\alpha$,
$c_{n,\alpha} \sim C/\sqrt{n}$ for some $C>0$.
The proof uses that fact that
$\sup_{0\leq x\leq 1} |\sqrt{n}(F_n(x) - x) - B(x)|\to 0$
in probability,
where $B$ is a Brownian bridge.
A simple extension, using the properties of a Brownian bridge,
shows that
$c_{n,\tilde\alpha} \sim \sqrt{\log (nd)/n}$.
The rest of the proof is then the same as the previous proof. $\Box$

\begin{lemma}
[Screening Property]
Recall that $R$ is the set of $j$ not rejected by the dip test.
Assume that
$$
\min_{j\in S}{\rm Dip}(p_j) > \sqrt{ \frac{2c_n }{n} \log(2nd)}.
$$
Then, for $n$ large enough,
$\mathbb{P}(R=S) > 1 - \frac{2}{n}.$
\end{lemma}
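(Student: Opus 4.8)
The plan is to show that $\{R \neq S\}$ has probability less than $2/n$ by splitting it into two error types and bounding each separately. Writing $R$ for the set of features for which the dip test rejects $H_0$ (equivalently, the features declared relevant), the event $\{R \neq S\}$ occurs if and only if there is at least one \emph{false positive} (some $j \notin S$ is rejected) or at least one \emph{false negative} (some $j \in S$ is not rejected). A union bound over these two events reduces the problem to controlling each, and I would aim for a bound of $\alpha/n$ on the first and $1/n$ on the second, so that the total is $(1+\alpha)/n < 2/n$ since $\alpha < 1$.

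For the false positives, each $j \notin S$ has a unimodal marginal $p_j$, so $H_0$ genuinely holds for the $j$-th test. The dip test run at level $\tilde\alpha = \alpha/(nd)$ then rejects with probability at most $\tilde\alpha$ by the defining property of the critical value $c_{n,\tilde\alpha}$. Since there are at most $d$ such features, a union bound gives $\mathbb{P}(\exists\ \text{false positive}) \leq d\tilde\alpha = \alpha/n$; this is exactly the multiple-testing calculation already recorded when $\tilde\alpha$ was introduced.

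For the false negatives, I would invoke the multiple-testing false-negative lemma. Fix $j \in S$ and set $\delta_j = {\rm Dip}(p_j)$. Assumption (A4) gives $\delta_j > \sqrt{2 c_n \log(2nd)/n}$, and since $c_n \to \infty$ this forces $\sqrt{n/\log(nd)}\,\delta_j \to \infty$ (because $\log(2nd)/\log(nd)\to 1$), so the lemma's hypothesis holds and $\mathbb{P}(T_{nj} \leq c_{n,\tilde\alpha}) < 2 e^{-n\delta_j^2/2}$. Substituting the lower bound on $\delta_j$ gives $n\delta_j^2/2 > c_n \log(2nd)$, hence $e^{-n\delta_j^2/2} < (2nd)^{-c_n}$, so each feature in $S$ is missed with probability at most $2(2nd)^{-c_n}$. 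A union bound over the $s \leq d$ features of $S$ yields $\mathbb{P}(\exists\ \text{false negative}) \leq 2d(2nd)^{-c_n} \leq (2nd)^{1-c_n}$, which is at most $1/n$ once $c_n \geq 2$, i.e. for all large $n$.

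Combining the two pieces, $\mathbb{P}(R \neq S) \leq \alpha/n + 1/n < 2/n$, which is the claim. The main work — and the only place the specific threshold in (A4) is used — is the false-negative step: the $\sqrt{\log(2nd)}$ inflation of the per-test dip threshold is precisely what compensates for the union bound over up to $d$ tests, and the slowly growing factor $c_n$ is what drives the per-feature miss probability below $1/(nd)$ so that the aggregate stays at $O(1/n)$. The false-positive side is routine given the calibration of the test, the only implicit ingredient being that features outside $S$ have unimodal marginals, so that $H_0$ holds for them.
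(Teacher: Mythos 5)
Your proposal is correct and follows essentially the same route as the paper: a union bound splitting $\{R\neq S\}$ into false positives (controlled by the calibration of the dip test at level $\tilde\alpha=\alpha/(nd)$, giving $d\tilde\alpha=\alpha/n$) and false negatives (controlled by the multiple-testing false-negative lemma together with the Dip lower bound in (A4), giving a bound below $1/n$ for large $n$). Your write-up is in fact more explicit than the paper's, which states the bound $2s e^{-n\delta^2/2} < 1/n$ without spelling out the substitution $n\delta^2/2 > c_n\log(2nd)$ or the verification of the lemma's hypothesis $\sqrt{n/\log(nd)}\,\delta\to\infty$, both of which you supply.
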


\begin{proof}
By the union bound and the previous lemma,
the probability of omitting any $j\in S$ is at most
$2s e^{-n\delta^2/2} < 1/n$ where
$\delta = \min_{j\in S}{\rm Dip}(p_j)$.
On the other hand,
probability of including any feature $j\in S^c$
is at most $\tilde\alpha = d\alpha/(nd) = \alpha/n < 1/n$.
\end{proof}

\subsection{Mode and Cluster Stability}

Now we need some properties of density modes.
Recall that $p\in C^3$, has $k$ modes
$m_1,\ldots, m_k$ separated by $a>0$
and by (A2),
the Hessian $H(m)$ at each mode $m$
has eigenvalues 
in $[-B, -b]$ for some
$0 < b < B < \infty$.
Let
$\kappa_j = \sup_x ||p^{(j)}||$.
Since $p\in C^3$,
$\kappa_j$ is finite for
$j=0,1,2,3$.
Let $\tilde p\in C^3$ be another density.
Let
$\eta_j = \sup_x ||p^{(j)} - \tilde p^{(j)}||.$
Later, $\tilde p$ will be taken to be an estimate of $p$.
For now, it is just another density that is close to $p$.
We want to show that $\tilde p$ has similar clusters to $p$.

{\bf (A6)
Assume that
$\eta_0 < a^2/8$,
$\eta_0 < 9/(128 \kappa_3)$ and 
$\eta_2 < b/2$.}

\begin{lemma}
\label{lemma::prop1}
Assume (A1) - (A6).
Then
$\tilde p$ has exactly $k$ modes
$\tilde m_1,\ldots, \tilde m_k$.
After an appropriate relabeling of the indices, we have
$\max_{1\leq j\leq k} ||m_j - \tilde m_j|| \leq  \sqrt{8\eta_0}.$
\end{lemma}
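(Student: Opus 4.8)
The plan is to localize the analysis around each true mode and handle the rest of the support separately: I would show that $\tilde p$ has exactly one mode inside each ball $B(m_j,\gamma)$, has none outside $\bigcup_j B(m_j,\gamma)$, and that the mode in $B(m_j,\gamma)$ lies within $\sqrt{8\eta_0}$ of $m_j$. First I would use $\eta_2 < b/2$ to transfer concavity from $p$ to $\tilde p$. Writing $\tilde H$ for the Hessian of $\tilde p$, for any unit vector $v$ and any $y\in B(m_j,\gamma)$ we have $v^T \tilde H(y) v \le v^T H(y) v + \eta_2 \le -b + b/2 = -b/2$, using the eigenvalue bound from (A2) and $\|\tilde H(y)-H(y)\|\le\eta_2$. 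Thus $\tilde p$ is strictly concave on each $B(m_j,\gamma)$, which immediately gives \emph{at most} one critical point there, and any such point is automatically a mode.

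For existence and the location bound I would run a value-comparison argument. Taylor expanding $p$ about $m_j$ and using the eigenvalue bound gives $p(m_j)-p(m_j+u)\ge \frac b2\|u\|^2 - \frac{\kappa_3}{6}\|u\|^3$. Since $p$ and $\tilde p$ differ by at most $\eta_0$ pointwise, the value of $\tilde p$ at the center $m_j$ exceeds its values on the sphere $\|u\|=\rho$ whenever $\frac b2\rho^2 - \frac{\kappa_3}{6}\rho^3 > 2\eta_0$, so $\tilde p$ attains an interior maximum of $B(m_j,\rho)$, which by concavity is the unique mode $\tilde m_j$. Applying the same Taylor inequality at $\tilde m_j$, together with $\tilde p(\tilde m_j)\ge \tilde p(m_j)\ge p(m_j)-\eta_0$ and $\tilde p(\tilde m_j)\le p(\tilde m_j)+\eta_0$, yields $\frac b2\|m_j-\tilde m_j\|^2 - \frac{\kappa_3}{6}\|m_j-\tilde m_j\|^3 \le 2\eta_0$. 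The conditions $\eta_0 < a^2/8$ and $\eta_0 < 9/(128\kappa_3)$ are exactly what guarantees the displacement is small enough that the cubic remainder is dominated by the quadratic term; solving then gives $\|m_j-\tilde m_j\|\le \sqrt{8\eta_0}<a$, so distinct estimated modes cannot collide.

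Next I would rule out spurious modes outside $\bigcup_j B(m_j,\gamma)$. Since $p$ is Morse with finitely many critical points on a compact support, I would split the complement into (i) neighborhoods of the \emph{non-maximal} critical points of $p$, where $H(x)$ has an eigenvalue bounded below by a positive constant, so that by $\eta_2$ small the Hessian of $\tilde p$ retains a positive eigenvalue and no local maximum can occur; and (ii) the remaining region, where $\|\nabla p\|\ge g_0>0$ by compactness. For region (ii) I need $\|\nabla\tilde p\|>0$, which requires controlling $\eta_1$; although (A6) bounds only $\eta_0$ and $\eta_2$, a Landau--Kolmogorov interpolation inequality gives $\eta_1 \preceq \sqrt{\eta_0\eta_2}$, so $\eta_1\to 0$ as $\eta_0\to 0$ and in particular $\eta_1<g_0$ once $\eta_0$ is small, forcing $\nabla\tilde p\neq 0$ throughout region (ii).

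Combining the steps, $\tilde p$ has exactly one mode in each of the $k$ balls and none elsewhere, hence exactly $k$ modes; relabeling so that $\tilde m_j$ is the mode in $B(m_j,\gamma)$ gives $\max_j\|m_j-\tilde m_j\|\le\sqrt{8\eta_0}$. I expect the main obstacle to be the spurious-mode step, since unlike the purely local concavity and value-comparison arguments it is global: it requires the gradient of $\tilde p$ to stay bounded away from zero over all of the support away from the true modes, which is where the interpolation bound on $\eta_1$ and the compactness-based gradient lower bound $g_0$ do the real work. The value-comparison step, by contrast, should be routine once the cubic Taylor remainder is controlled by the smallness conditions in (A6).
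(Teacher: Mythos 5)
Your local analysis is essentially the paper's own argument. The paper also works in a ball around each $m_j$ (of radius $\epsilon=\sqrt{8\eta_0}$, legitimate since $\sqrt{8\eta_0}<a$ by (A6)), Taylor-expands $p$ with cubic remainder $\kappa_3\|x-m\|^3/6$, and uses the pointwise bound $|\tilde p-p|\le\eta_0$ to compare values: it shows $\inf_{A_0}\tilde p>\sup_{A_2}\tilde p$ for an inner region $A_0$ and outer annulus $A_2$ of the ball, so the maximizer of $\tilde p$ over the ball is interior, which yields both existence of a critical point of $\tilde p$ and the $\sqrt{8\eta_0}$ displacement bound in one stroke (you instead re-derive the displacement by a second value comparison at $\tilde m_j$; both work). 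For uniqueness the paper integrates $\tilde g(x)-\tilde g(y)$ along the segment joining two putative modes and uses $\lambda_1(H)\le-b$ together with $\eta_2<b/2$ to reach a contradiction, which is exactly your strict-concavity observation in integral form. Both you and the paper are equally loose with constants here (the paper's choice $\alpha/\beta=\sqrt{b/B}$ actually makes its key inequality degenerate, and your claimed bound $\sqrt{8\eta_0}$ really carries a hidden dependence on $1/b$), so I will not press that point.

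The genuine divergence is the no-spurious-modes step, and it is also where your proposal has a gap. The paper disposes of this in one line: if $\tilde p$ had a mode $x$ outside the union of balls, then ``by a symmetric argument'' $p$ would have a mode in $B(x,\epsilon)$, contradicting that $m_1,\ldots,m_k$ are the only modes of $p$. Your stratification is more explicit, but it does not close under the stated hypotheses. First, your interpolation bound $\eta_1\preceq\sqrt{\eta_0\eta_2}$ only rules out critical points on region (ii) if $\sqrt{\eta_0\eta_2}$ is small compared with the gradient floor $g_0$; but (A6) gives only the \emph{fixed} bounds $\eta_0<a^2/8$, $\eta_0<9/(128\kappa_3)$, $\eta_2<b/2$, none of which involve $g_0$, so ``once $\eta_0$ is small enough'' is an assumption beyond (A1)--(A6). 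Second, the compactness claim $\|\nabla p\|\ge g_0>0$ on region (ii) fails near the boundary of the support, where $p$, $\nabla p$ and $H$ all tend to zero: a perturbation satisfying (A6) can place a shallow bump there that neither your positive-eigenvalue argument nor your gradient bound excludes. To be fair, the paper's symmetric argument silently suffers from the same defect (a bump-mode of $\tilde p$ in a flat, low-density region of $p$ forces no nearby mode of $p$, because the quantitative curvature hypotheses hold only for $p$ near its own modes), so your instinct that this step is the real obstacle is correct; but as written your proof of it requires extra smallness and lower-bound conditions that the lemma does not grant you.
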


The proof is in the supplementary material.

\begin{lemma}
\label{lemma::rightmode}
Suppose that $m(x) = m_j$.
Let
$\delta = \frac{C_1}{\log\left(\frac{C_2}{\eta_1}\right)}.$
Let
$d(x) = \inf\Biggl\{||x-y||:\ y\in \bigcup_j \partial C_j\Biggr\}$
be the distance of $x$ from the cluster boundaries.
If $\sqrt{\eta_0} < C_3 a$
and if
$d(x) > \delta$,
then
$\tilde m(x) = \tilde m_j$.
\end{lemma}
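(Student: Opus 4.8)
The plan is to reduce the claim to a trapping-region argument for the gradient flow of $\tilde p$, and to control the two flows by a Gronwall comparison up to the finite time at which the $p$-flow first enters a neighborhood of $m_j$. First I would fix a radius $0<r\le\gamma$ and show $B(m_j,r)$ is a trapping region for the $p$-flow: writing $V(y)=\|y-m_j\|^2$ and using $\nabla p(m_j)=0$ together with the Hessian bound $\lambda_1(H)\le -b$ on $B(m_j,\gamma)$ from (A2), a Taylor expansion gives $\dot V=2(y-m_j)^\top\nabla p(y)\le -2b\,V$, so the $p$-flow cannot leave $B(m_j,r)$ and converges to $m_j$. The same computation for $\tilde p$, using $\|H-\tilde H\|\le\eta_2<b/2$ from (A6) and $\nabla\tilde p(\tilde m_j)=0$, shows the Hessian of $\tilde p$ has eigenvalues $\le -b/2$ near $\tilde m_j$; combined with $\|m_j-\tilde m_j\|\le\sqrt{8\eta_0}$ from Lemma~\ref{lemma::prop1} and the hypothesis $\sqrt{\eta_0}<C_3 a$, a slightly smaller ball $B(\tilde m_j,r')\subset B(m_j,r)$ becomes a trapping region for the $\tilde p$-flow that attracts to $\tilde m_j$. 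It therefore suffices to show the $\tilde p$-flow started at $x$ ever enters $B(\tilde m_j,r')$.

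Next I would run the two flows in parallel. Since $m(x)=m_j$, the $p$-flow reaches $B(m_j,r/2)$ at some finite time $T:=T_{\rm reach}(x)$, staying in $\mathcal{C}_j\setminus B(m_j,r/2)$ on $[0,T]$. A Gronwall estimate for $e(t)=\|\pi_x(t)-\tilde\pi_x(t)\|$, using that $\nabla p$ is $L$-Lipschitz with $L=\kappa_2$ and $\sup\|\nabla p-\nabla\tilde p\|\le\eta_1$, gives $e'(t)\le L\,e(t)+\eta_1$ and hence $e(t)\le\tfrac{\eta_1}{L}\bigl(e^{Lt}-1\bigr)$. The key quantitative input is an upper bound on the reach time of the form $T_{\rm reach}(x)\le C/d(x)$. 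Granting this, at $t=T$ we get $e(T)\le\tfrac{\eta_1}{L}\exp\bigl(LC/d(x)\bigr)$, and the hypothesis $d(x)>\delta=C_1/\log(C_2/\eta_1)$ with $C_1=LC$ and $C_2=L\rho$ forces $e(T)<\rho$. Choosing $\rho$ to be a small fixed fraction of $r$ — possible because $\sqrt{\eta_0}<C_3 a$ makes $\|m_j-\tilde m_j\|$ negligible relative to $r$ — places $\tilde\pi_x(T)$ inside $B(\tilde m_j,r')$, after which the trapping property above drives the $\tilde p$-flow to $\tilde m_j$, i.e. $\tilde m(x)=\tilde m_j$.

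The main obstacle is the reach-time bound $T_{\rm reach}(x)\preceq 1/d(x)$. I would prove it by reparametrizing the $p$-orbit by arc length, so that $T_{\rm reach}(x)=\int\|\nabla p\|^{-1}\,ds$ over a path of bounded length inside $\mathcal{C}_j\setminus B(m_j,r/2)$, and then bounding $\|\nabla p\|$ from below. On this set the only critical points in the closure are the Morse saddles lying on $\partial\mathcal{C}_j$; non-degeneracy gives $\|\nabla p(y)\|\ge c_0\,\mathrm{dist}(y,\partial\mathcal{C}_j)$ near each saddle and a uniform positive lower bound away from the boundary, so $\|\nabla p(y)\|\ge c_0\,d(y)$ throughout. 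The delicate point is to control $d(\cdot)$ along the flow: since $\partial\mathcal{C}_j$ is flow-invariant and the time-$t$ flow map is onto it, the reverse Gronwall inequality yields $d(\pi_x(t))\ge d(x)\,e^{-Lt}$, and one must combine this with the gradient lower bound to argue that the orbit clears the slow saddle region in time $O(1/d(x))$ rather than lingering anomalously long near $\partial\mathcal{C}_j$. This estimate — trading the exponential Gronwall growth $e^{LT}$ against the inverse-distance reach time — is precisely what produces the logarithmic threshold $\delta=C_1/\log(C_2/\eta_1)$, and is where the bulk of the work lies; the remaining steps are routine given (A1), (A2), (A6) and Lemma~\ref{lemma::prop1}.
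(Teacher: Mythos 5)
Your overall architecture --- a Gronwall comparison of the two flows, a quantitative reach-time bound, an attraction/trapping argument near the mode, and the arithmetic trade-off that produces $\delta = C_1/\log(C_2/\eta_1)$ --- is essentially the same as the paper's. The paper runs exactly this comparison, using Lemma \ref{lemma::boundary} for the reach time and citing the proof of Theorem 2 of Arias-Castro et al.\ for the final attraction step, where you instead prove a trapping region directly via $V(y)=\|y-m_j\|^2$ and the Hessian bounds in (A2), (A6); that part of your argument is fine and is, if anything, more self-contained than the paper's.

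The genuine gap is the reach-time estimate $T_{\rm reach}(x)\preceq 1/d(x)$, which you yourself flag as ``where the bulk of the work lies'' but never establish --- and the sketch you offer for it cannot work as stated. You propose to combine the gradient lower bound $\|\nabla p(y)\|\geq c_0\,d(y)$ with the reverse-Gronwall bound $d(\pi_x(t))\geq d(x)e^{-Lt}$. Together these give only the speed lower bound $\|\pi_x'(t)\|\geq c_0\,d(x)e^{-Lt}$, whose integral saturates:
\begin{equation*}
\int_0^T \|\pi_x'(t)\|\,dt \;\geq\; \frac{c_0\,d(x)}{L}\bigl(1-e^{-LT}\bigr),
\end{equation*}
and the right-hand side never exceeds $c_0\,d(x)/L$, no matter how large $T$ is. Since the orbit must travel a distance bounded below by a constant of order $a$ (from the $\delta$-neighborhood of the boundary to $B(m_j,r/2)$), while $c_0\,d(x)/L\to 0$ as $d(x)\to 0$, this estimate cannot certify that the orbit ever reaches $B(m_j,r/2)$ at all, let alone in time $C/d(x)$: an exponentially decaying speed bound is precisely compatible with the orbit lingering near the boundary forever. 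What is missing is the actual escape mechanism: near a Morse saddle $\sigma\in\partial{\cal C}_j$ the linearized ascent flow \emph{expands} the unstable component at rate $e^{\mu t}$ ($\mu>0$ the positive eigenvalue of $H(\sigma)$), so that $d(\pi_x(t))$ grows rather than merely not shrinking too fast, and the orbit clears the saddle region in time $O(\log(1/d(x)))$ --- a local analysis you would need to carry out, and which in fact gives a bound of a different (better) order than your claimed $C/d(x)$. The paper's bookkeeping is different: Lemma \ref{lemma::boundary} bounds the reach time by $C_6/\|g(x_0)\|$ plus a logarithmic term from the quadratic zone around $m_j$, and the proof then asserts the pointwise bound $\|g(x)\|\geq C_4/\log(C_2/\eta_1)$ for points with $d(x)>\delta$, so the time is controlled by the gradient at the starting point rather than by $1/d(x)$. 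That gradient-versus-distance link is itself stated without proof in the paper, so your instinct to derive it from the Morse structure near the boundary is sound --- but as written your proposal replaces the paper's one unproven assertion with an argument that provably does not close.
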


\begin{proof}
There are two cases:
$x\in B(m_j,\sqrt{\epsilon})$ and
$x\notin B(m_j,\sqrt{\epsilon})$.
The more difficult case is the latter;
we omit the first case.
As $x$ is not on the boundary
and not in
$B(x,\sqrt{\epsilon})$, we have that
$||g(x)|| \neq 0$
and in particular,
$||g(x) || \geq \frac{C_4}{\log\left(\frac{C_2}{\eta_1}\right)}.$
Fix a small $\epsilon >0$.
There exists $t_\epsilon$,
depending on $x$, such that
$\pi_x(t_\epsilon)\in B(m_j,C_2\sqrt{\epsilon})$.
From Lemma \ref{lemma::boundary} below, we have
$$
t_\epsilon \leq
\frac{C_5}{||g(x_0)||} +
\frac{\frac{1}{2}\log(1/\epsilon) + \log ||x_0-m||}{b}.
$$
From this, it follows that
$\epsilon + 2\eta_0 +  \frac{\kappa_1}{\sqrt{d}\kappa_2}\eta_1 e^{\sqrt{d}\kappa_2 t_\epsilon} < C_6$
for $C_6 < \infty$.
This equation implies,
from the proof of Theorem 2 of Arias-Castro et al,
that
$||\lim_{t\to\infty}\tilde\pi_x(t) - m_j||\leq C_4 \sqrt{\eta_0}$.
Since $C_4 \sqrt{\eta_0} < a$, when $\eta_0$ is small enough
we conclude that
$\lim_{t\to\infty}\tilde\pi_x(t) = \tilde m_j$.
\end{proof}

\begin{lemma}
\label{lemma::boundary}
Consider the flow $\pi$ starting at a point $x_0$
and ending at a mode $m$.
For some $C_6>0$,
$$
t_\epsilon \leq
\frac{C_6}{||g(x_0)||} +
\frac{\frac{1}{2}\log(1/\epsilon) + \log ||x_0-m||}{b}.
$$
\end{lemma}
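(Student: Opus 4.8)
The plan is to split the trajectory at the first time $T_0$ at which the flow enters the ball $B(m,\gamma)$ supplied by assumption (A2). On $[T_0,t_\epsilon]$ the flow lives entirely in the region where the Hessian is uniformly negative definite, and I would show it contracts toward $m$ at exponential rate $b$; this produces the logarithmic term $\frac{1}{b}\bigl(\tfrac12\log(1/\epsilon)+\log\|x_0-m\|\bigr)$. On $[0,T_0]$ the flow is bounded away from $m$, and I would bound the elapsed time by $C_6/\|g(x_0)\|$, using that the flow moves with speed $\|g\|$. Adding the two pieces and absorbing every additive constant into the first term — which is legitimate since $\|g(x_0)\|\le\kappa_1$, so any constant $c$ satisfies $c\le c\kappa_1/\|g(x_0)\|$ — gives the stated bound.

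For the near-mode phase I would take $V(t)=\|\pi(t)-m\|^2$ and compute $\dot V(t)=2(\pi(t)-m)^{T}g(\pi(t))$. Writing $g(\pi(t))=g(\pi(t))-g(m)=\bar H(t)\,(\pi(t)-m)$, where $\bar H(t)=\int_0^1 H\bigl(m+u(\pi(t)-m)\bigr)\,du$ is the averaged Hessian along the convex segment from $m$ to $\pi(t)$, assumption (A2) forces $\lambda_1(H(\cdot))\le -b$ at every point of $B(m,\gamma)$, so that $\bar H(t)$ has all eigenvalues at most $-b$ and
$$
\dot V(t)=2(\pi(t)-m)^{T}\bar H(t)(\pi(t)-m)\le -2b\,\|\pi(t)-m\|^2=-2b\,V(t).
$$
Hence $V$ is strictly decreasing inside $B(m,\gamma)$ — so the flow never leaves once it enters at $T_0$ — and Gronwall's inequality gives $\|\pi(t)-m\|\le\|\pi(T_0)-m\|\,e^{-b(t-T_0)}$. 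Solving $\|\pi(T_0)-m\|\,e^{-b(t-T_0)}=C_2\sqrt\epsilon$ yields $t-T_0=\frac{1}{b}\bigl(\tfrac12\log(1/\epsilon)+\log\|\pi(T_0)-m\|-\log C_2\bigr)$, and since $\log\|\pi(T_0)-m\|\le\log\|x_0-m\|$ (with equality when $x_0\in B(m,\gamma)$, and otherwise $\|\pi(T_0)-m\|=\gamma<\|x_0-m\|$), this is exactly the second term up to the additive constant $-\log C_2/b$.

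The hard part will be the escape phase $[0,T_0]$, converting the initial gradient size into a time bound. Because the flow has speed $\|\pi'(t)\|=\|g(\pi(t))\|$, the elapsed time is an integral of $\|g\|^{-1}$ against arc length, so I need a lower bound on $\|g\|$ along the portion of the trajectory outside $B(m,\gamma)$. Away from the finitely many critical points of $p$, $\|g\|$ is bounded below by a positive constant on the compact support, so the bulk of the journey costs $O(1)$ time; and since the flow limits to $m$, it converges to no critical point other than $m$, which already lies in $B(m,\gamma)$. The only remaining small-gradient region is the neighborhood of the starting point itself, where I would bound $\|g\|$ below in terms of $\|g(x_0)\|$ via the Lipschitz estimate $\|g(y)-g(x_0)\|\le\kappa_2\|y-x_0\|$. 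Collecting these gives a total time outside $B(m,\gamma)$ that is at most a constant multiple of $1/\|g(x_0)\|$ (again using $\|g(x_0)\|\le\kappa_1$ to absorb the $O(1)$ bulk contribution into this form). The genuinely delicate point — keeping the constant uniform when the trajectory passes transiently close to a saddle, where $\|g\|$ dips — is handled exactly by the trajectory estimates underlying Theorem 2 of Arias-Castro, Mason, Pelletier (2014), which the paper already invokes for the companion Lemma \ref{lemma::rightmode}. This yields $T_0\le C_6/\|g(x_0)\|$, and combining with the near-mode phase completes the proof.
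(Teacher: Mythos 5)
Your proposal uses the same two-phase decomposition as the paper (a finite escape phase charged to $1/\|g(x_0)\|$, then exponential contraction near $m$ giving the logarithmic terms), but both phases are implemented by genuinely different arguments. For the contraction phase, the paper linearizes the flow via Lemma 5 of Arias-Castro et al., writing $\pi(t)-m = e^{tH}(x_0-m)+\xi$ inside a ``quadratic zone'' where the cubic remainder is controlled; your Lyapunov/Gronwall argument with the averaged Hessian $\bar H(t)=\int_0^1 H\bigl(m+u(\pi(t)-m)\bigr)\,du$ is more self-contained: convexity of $B(m,\gamma)$ and convexity of $\lambda_1(\cdot)$ give $\lambda_1(\bar H(t))\le -b$ directly from (A2), no remainder estimate is needed, and you get forward invariance of $B(m,\gamma)$ for free --- a fact the paper uses implicitly (when it restarts its Case 1 from the re-entry point) but never proves. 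This half of your argument is correct and, if anything, tighter than the paper's.

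The escape phase contains a concrete quantitative slip. Your mechanism --- speed equals $\|g\|$, Lipschitz bound $\|g\|\ge\|g(x_0)\|/2$ on the ball $B\bigl(x_0,\|g(x_0)\|/(2\kappa_2)\bigr)$ --- needs both a lower bound on speed and an upper bound on the arc length (or, equivalently, the time) spent in that ball; the natural way to get the latter is the energy identity $\frac{d}{dt}\,p(\pi(t))=\|g(\pi(t))\|^2\ge \|g(x_0)\|^2/4$, which bounds the time by $4\kappa_0/\|g(x_0)\|^2$. That is a bound of order $1/\|g(x_0)\|^2$, not $1/\|g(x_0)\|$, and the discrepancy matters precisely in the regime $\|g(x_0)\|\to 0$ where Lemma \ref{lemma::rightmode} invokes this lemma. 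The paper gets the $1/\|g(x_0)\|$ form by a different device: integrating $\pi'=g(\pi)$ and expanding $g$ to first order about $x_0$ gives $t_1 g(x_0) = \pi(t_1)-x_0 - \int_0^{t_1} H_s(\pi(s)-x_0)\,ds$, and taking norms isolates $t_1\|g(x_0)\|$ linearly. (That device has its own defect: the resulting ``constant'' $C_6$ contains a trajectory-dependent integral, which is where the paper, like you, quietly defers the hard uniformity question for trajectories passing near saddles; you are at least explicit in outsourcing that to Arias-Castro et al.) So to obtain the lemma in the stated form you should graft the paper's integral identity into your escape phase; alternatively, your weaker $1/\|g(x_0)\|^2$ bound can still be carried through Lemma \ref{lemma::rightmode} by redefining $\delta$ there as $C_1/\sqrt{\log(C_2/\eta_1)}$, which degrades the rate in the main theorem from $(1/\log n)^\beta$ to $(1/\sqrt{\log n})^\beta$ but changes nothing qualitatively.
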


The proof is in the supplementary material.

The next lemma shows that if $x$ and $y$ are in the same cluster
and not too close to a cluster boundary,
then $x$ and $y$ are also in the same cluster relative to $\tilde p$.

\begin{lemma}
\label{lemma::prop2}
Suppose that (A1)-(A6) holds and that
$\sqrt{\eta_0} < C_4 a$.
Suppose that
$x,y \in {\cal C}_j$ and hence
$m(x) = m(y)=m_j$ and
$c(x,y) =1$.
Furthermore,
suppose that
$x,y \notin \Omega_\delta$.
(Recall that $\Omega_\delta$ is defined in (\ref{eq::boundary}).)
Then
$\tilde m(x) = \tilde m(y) = \tilde m_j$ and so
$\tilde c(x,y) =1$.
\end{lemma}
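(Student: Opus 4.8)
The plan is to reduce the statement to two applications of Lemma \ref{lemma::rightmode}, one for $x$ and one for $y$, after which the conclusion is immediate. First I would unpack the hypothesis $x,y\notin\Omega_\delta$. By the definition of the cluster margin in (\ref{eq::boundary}) as the $\delta$-neighborhood $\bigl(\bigcup_j\partial\mathcal{C}_j\bigr)\oplus\delta$, a point $z$ lies in $\Omega_\delta$ precisely when its distance to the union of the cluster boundaries is at most $\delta$. Equivalently, $z\notin\Omega_\delta$ is exactly the condition $d(z)>\delta$, where $d(\cdot)$ is the boundary-distance function introduced in Lemma \ref{lemma::rightmode}. Thus both $x$ and $y$ satisfy the separation hypothesis $d(\cdot)>\delta$ with the same $\delta=C_1/\log(C_2/\eta_1)$.

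Second, I would check that the remaining hypotheses of Lemma \ref{lemma::rightmode} are in force for each of $x$ and $y$. Since $x,y\in\mathcal{C}_j$, the definition of the basin of attraction gives $m(x)=m(y)=m_j$, so the requirement $m(\cdot)=m_j$ holds for both points. The smallness condition $\sqrt{\eta_0}<C_4 a$ assumed here supplies the smallness $\sqrt{\eta_0}<C_3 a$ needed by Lemma \ref{lemma::rightmode} (these are the same generic constraint on $\eta_0$; one simply takes the smaller of the two constants). Assumptions (A1)--(A6) are assumed throughout, so nothing further is needed.

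Third, applying Lemma \ref{lemma::rightmode} to $x$ yields $\tilde m(x)=\tilde m_j$, and applying it to $y$ yields $\tilde m(y)=\tilde m_j$. Combining these, $\tilde m(x)=\tilde m(y)=\tilde m_j$, and by the definition of the cluster function $\tilde c$ this gives $\tilde c(x,y)=1$, as claimed.

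The only point requiring any care -- and it is minor -- is the translation between the geometric statement $x\notin\Omega_\delta$ and the analytic separation condition $d(x)>\delta$ demanded by Lemma \ref{lemma::rightmode}; once that identification is made, the result follows uniformly in $x$ and $y$ directly from the single-point mode-stability guarantee. There is no genuine obstacle in this lemma itself: the substantive work, namely controlling the flow of $\tilde p$ for points bounded away from the boundary via the travel-time estimate, has already been carried out in Lemmas \ref{lemma::rightmode} and \ref{lemma::boundary}.
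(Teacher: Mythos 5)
Your proposal is correct and matches the paper's own proof, which likewise disposes of the lemma by noting that $x,y\notin\Omega_\delta$ means both points are at distance greater than $\delta$ from the cluster boundaries and then applying Lemma \ref{lemma::rightmode} once to $x$ and once to $y$. Your write-up is simply a more explicit version of the same argument, including the minor bookkeeping of identifying $x\notin\Omega_\delta$ with $d(x)>\delta$ and reconciling the constants $C_3$ and $C_4$.
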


\begin{proof}
Since $x,y \notin \Omega_\delta$,
from the definition of $\delta$
and
from Lemma
\ref{lemma::rightmode}
it follows
that
$\lim_{t\to\infty}\tilde \pi_x(t) = \tilde m_j$
and
$\lim_{t\to\infty}\tilde \pi_y(t) = \tilde m_j$.
\end{proof}

Next we show that if $x$ and $y$ are in different clusters
and not too close to a cluster boundary,
then $x$ and $y$ are in different clusters under $\tilde p$.
The proof is basically the same as the last proof and so is omitted.

\begin{lemma}
\label{lemma::prop3}
Assume that same conditions as in the previous lemma.
Suppose that
$m(x) = m_j$,
$m(y) = m_s$ with $s\neq j$.
Hence, $c(x,y) =0$.
Furthermore,
suppose that
$x,y \notin \Omega_\delta$.
Then
$\tilde m(x) = \tilde m_j$,
$\tilde m(x) = \tilde m_s$,
and
$\tilde c(x,y) =0$.
\end{lemma}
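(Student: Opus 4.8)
The plan is to reduce the statement entirely to Lemma \ref{lemma::rightmode}, applying it separately to the two points, exactly in the spirit of the proof of Lemma \ref{lemma::prop2}. First I would unpack the hypothesis $x,y\notin\Omega_\delta$. By the definition of $\Omega_\delta$ in (\ref{eq::boundary}) as the $\delta$-neighborhood of the union of the cluster boundaries, the condition $x\notin\Omega_\delta$ is precisely $d(x)>\delta$, where $d(\cdot)$ is the distance to $\bigcup_j\partial\mathcal{C}_j$ and $\delta=C_1/\log(C_2/\eta_1)$ is the quantity appearing in Lemma \ref{lemma::rightmode}; likewise $y\notin\Omega_\delta$ gives $d(y)>\delta$.

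Next I would apply Lemma \ref{lemma::rightmode} twice. Since $m(x)=m_j$, $d(x)>\delta$, and $\sqrt{\eta_0}<C_4 a$ (which supplies the $\sqrt{\eta_0}<C_3 a$ hypothesis of that lemma after adjusting constants), the lemma yields $\tilde m(x)=\tilde m_j$. Running the identical argument at $y$, where $m(y)=m_s$ and $d(y)>\delta$, gives $\tilde m(y)=\tilde m_s$. This establishes the first two conclusions of the lemma.

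It then remains to deduce $\tilde c(x,y)=0$, i.e.\ that $\tilde m_j\neq\tilde m_s$. Here I would invoke Lemma \ref{lemma::prop1}, which (after relabeling) matches the $k$ modes of $\tilde p$ to those of $p$ with $\max_\ell\|m_\ell-\tilde m_\ell\|\le\sqrt{8\eta_0}$. Combining this with the population mode separation $\|m_j-m_s\|\ge a$ from (A2) and the triangle inequality gives $\|\tilde m_j-\tilde m_s\|\ge a-2\sqrt{8\eta_0}$, which is strictly positive once $\sqrt{\eta_0}<C_4 a$ for a suitable constant $C_4$. Hence $\tilde m_j\neq\tilde m_s$, so $\tilde m(x)\neq\tilde m(y)$ and $\tilde c(x,y)=0$.

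There is no genuine obstacle beyond what has already been done in Lemmas \ref{lemma::prop1} and \ref{lemma::rightmode}; the only point requiring any care is the last one, namely that distinct population modes be carried to distinct estimated modes. That step is exactly where the separation $a$ and the mode-closeness bound $\sqrt{8\eta_0}$ must be balanced, and it is the one feature of this proof absent from Lemma \ref{lemma::prop2} (where both points share the \emph{same} mode, so no separation of estimated modes is needed). Everything else is a direct reuse of Lemma \ref{lemma::rightmode}, which is why the argument is essentially identical to the preceding one.
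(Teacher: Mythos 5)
Your proposal is correct and takes essentially the same route as the paper, which omits the proof with the remark that it is ``basically the same'' as that of Lemma \ref{lemma::prop2}: apply Lemma \ref{lemma::rightmode} separately to $x$ and to $y$, using $x,y\notin\Omega_\delta$ to supply the distance-to-boundary hypothesis. Your additional verification that $\tilde m_j\neq\tilde m_s$ (via Lemma \ref{lemma::prop1} and the mode separation $a$) makes explicit a point the paper leaves implicit --- namely that the $k$ estimated modes are distinct points --- and you handle it correctly.
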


\subsection{Proof of Main Theorem}

We have already shown that $R=S$
except on a set of probability at most $2/n$.
Assume in the remainder of the proof that $R=S$.

Now
$\mathbb{E}[L] = (\binom{n}{2})^{-1}\sum_{j<k} \mathbb{E}[I_{jk}]$
where
$I_{jk}= I \Bigl( \hat c(X_j,X_k)\neq c(X_j,X_k)\Bigr).$
Let
$\delta = C_1/\log(C_2/\eta_1)$.
Then
$$
\mathbb{E}[I_{jk}] \leq
\mathbb{E}[I_{jk} I( (X_j,X_k)\in \Omega_\delta^c)] + \mathbb{P} ((X_j,X_k)\notin \Omega_\delta^c).
$$
Consider
$(X_j,X_k)\in \Omega_\delta^c$;
then
$I_{jk}=0$ if
$\hat p_h$ satisfies (A6) and the condition of Lemma
\ref{lemma::rightmode}.
In other words,
$I_{jk}=0$ if,
$\sqrt{\eta_0} < C_8 a$ and 
$\eta_2 < b/2$ where
$\eta_0 = \sup_x||\hat p_h(x) - p(x)||$ and
$\eta_2 = \sup_x||\hat p_h^{(2)}(x) - p^{(2)}(x)||$.
Let $p_h$ be the mean of $\hat p_h$. 
Then
$\eta_0 \leq  \sup_x|| p_h(x) - p(x)|| + \sup_x|| \hat p_h(x) - p_h(x)||$.
The first term is $O(h^2)$ which is less than
$C_8^2a^2/2$ for small $h$.
By standard concentration of measure results,
$$
\mathbb{P}(\sup_x|| \hat p_h(x) - p_h(x)|| > \epsilon) \preceq
e^{-nc h^s \epsilon^2}
$$
where $c>0$ is a constant whose value may change in different expressions.
So
$$
\mathbb{P}(\sqrt{\eta_0} > C_8 a)\leq
\mathbb{P}(\sup_x|| \hat p_h(x) - p_h(x)|| > C_8^2 a^2/2) \leq
e^{-nc h^s}.
$$
A similar analysis for $\eta_2$ yields
$\mathbb{P}(\sqrt{\eta_0} > b/2) \leq
e^{-nc h^{s+4} b^2/4}.$
Therefore,
$\mathbb{E}[I_{jk} I( (X_j,X_k)\in \Omega_\delta^c)] \preceq e^{-nc h^{s+4}}.$
Now
$\mathbb{P} ((X_j,X_k)\notin \Omega_\delta^c) \preceq
P(\Omega_\delta) \preceq \delta^\beta.$
With high probability,
$$
\eta_1 = 
O\left( h^2 + \sqrt{\frac{\log n}{n h^{s+2}}}\right).
$$
Hence, if $h=n^{-b}$,
$\delta^\beta \preceq (1/\log n)^\beta$.

The second statement follows from the first
by inserting a small fixed $h>0$ and
noting that
the fraction of points
near the boundary is 
$\theta_n = O_P(\delta^\beta) = O_P(1/n^v)$
due to the condition on $\beta$. 

For the third statement,
note that once $\eta_0$ is small enough,
the previous results imply that
${\cal M}$ and 
$\hat{\cal M}$ 
have the same cardinality.
In this case, the Hausdorff distance is, after relabelling the indices,
$H(\hat{\cal M},{\cal M}) = \max_j ||\hat m_j - m_j||$.
Once $\eta_0$ is small enough,
Lemma \ref{lemma::prop1} implies
$\max_j ||\hat m_j - m_j|| \leq \sqrt{8 \eta_0}$.
The result follows from the bounds on $\eta_0$ above.
$\Box$

\section{Example}
\vspace{-0.1in}
\begin{figure}
\begin{center}
\begin{tabular}{ccc}
\includegraphics[scale=.2]{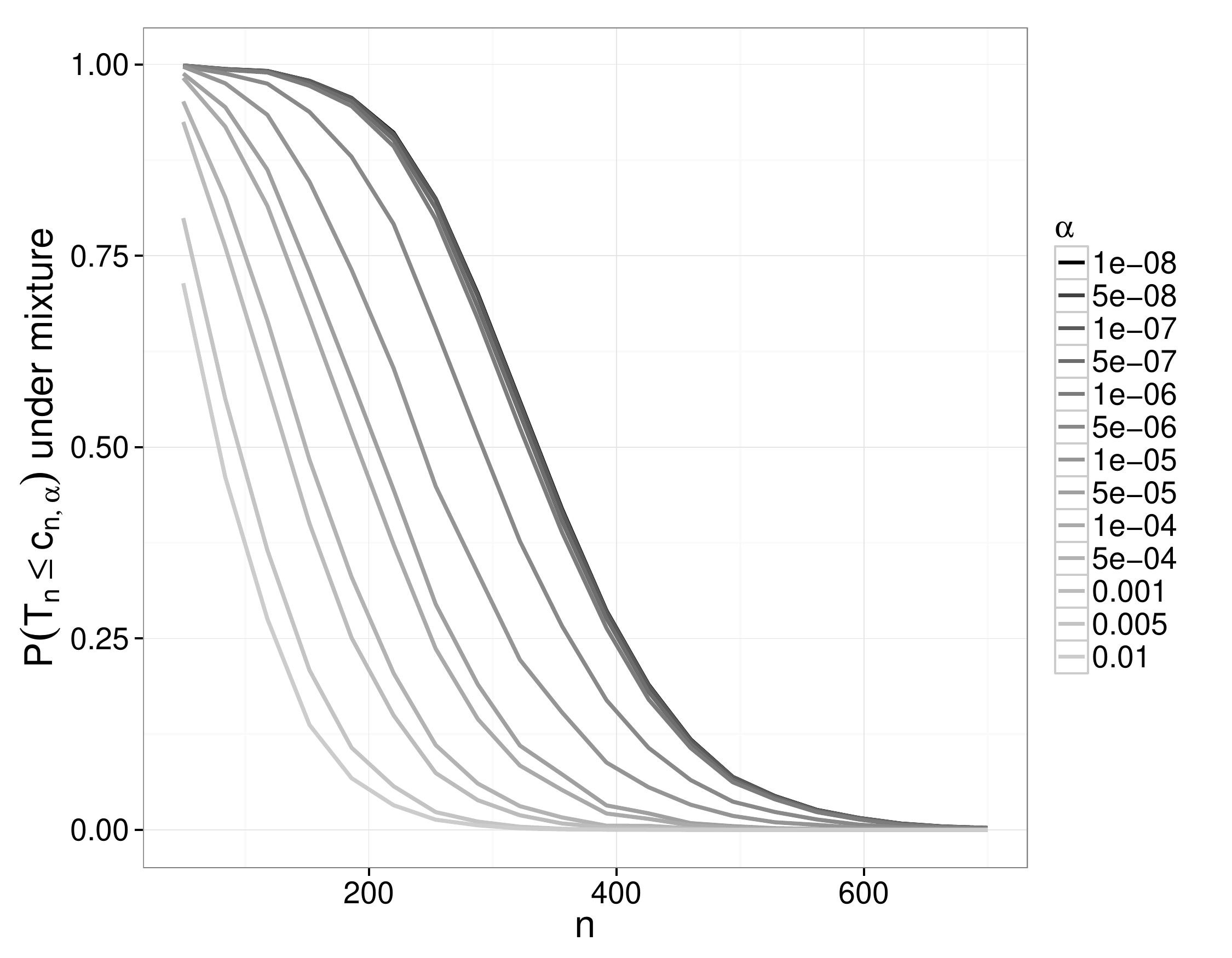} &
\includegraphics[scale=.2]{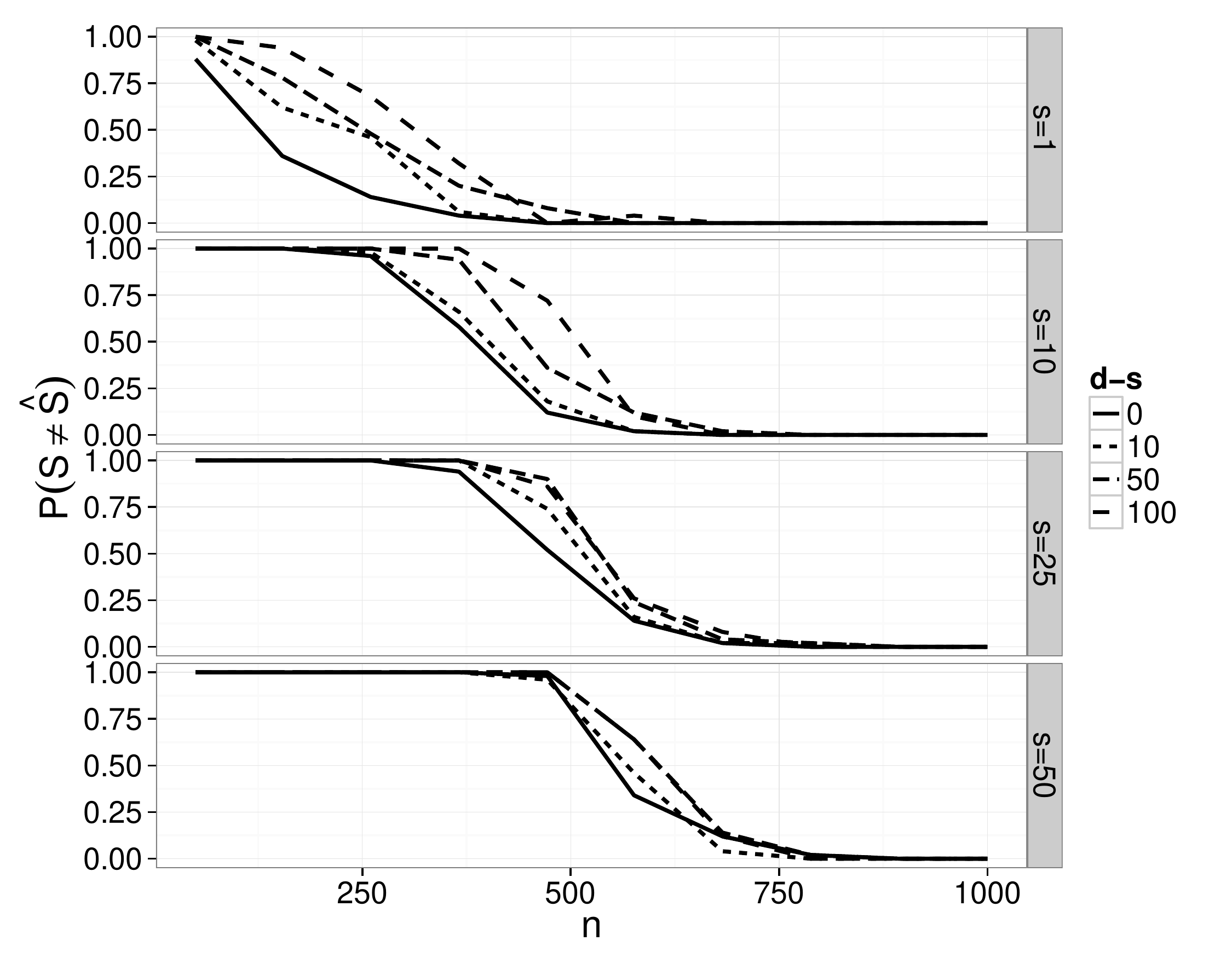} &
\includegraphics[scale=.2]{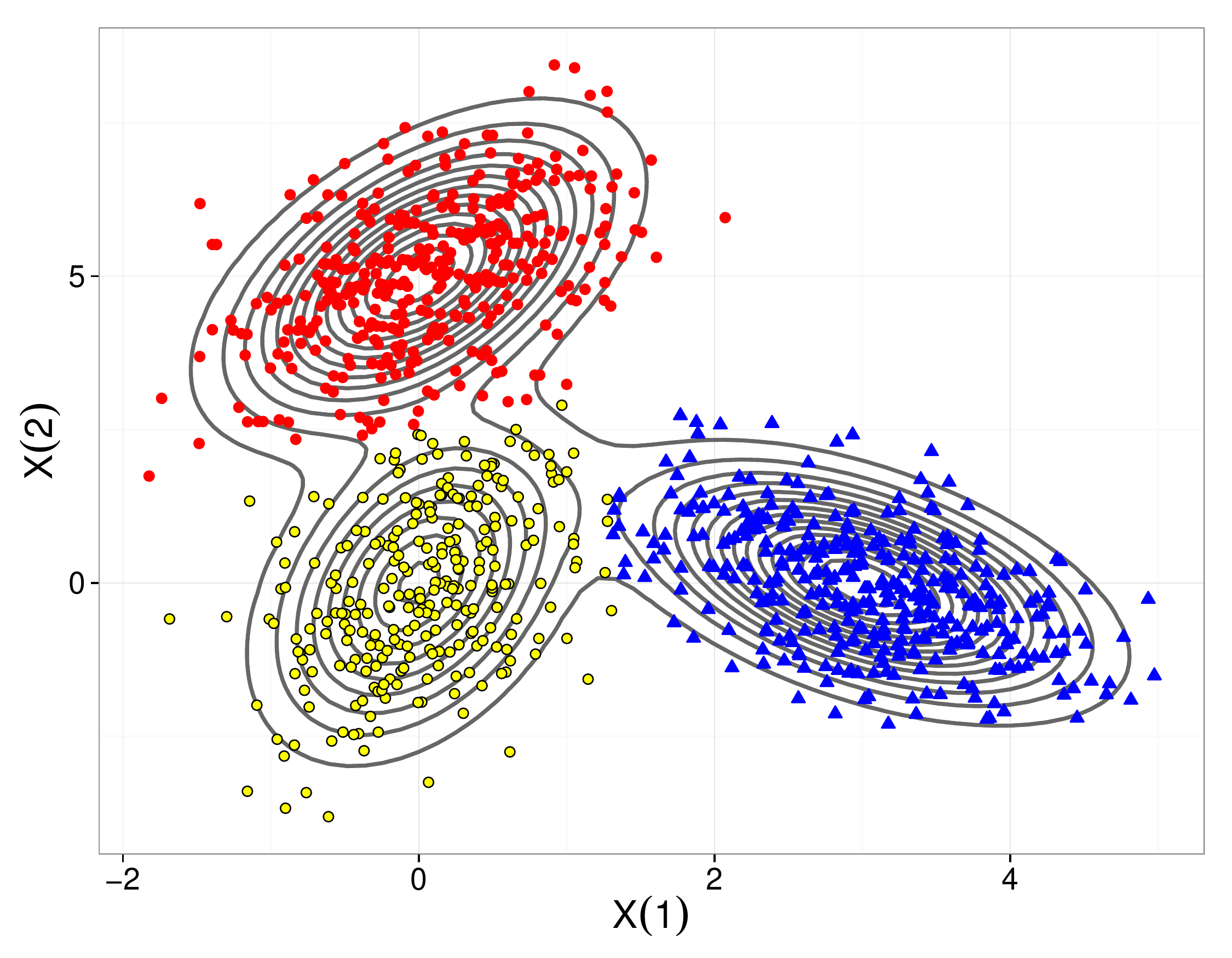}\vspace{-0.1in}
\end{tabular}
\end{center}
\caption{Left: false negative rate as a function of $\alpha$.
Middle: overall screening error rate.
Right: Final clustering based on relevant features.}\vspace{-0.1in}
\label{fig::example}
\end{figure}



In this section we give a brief
example of the proposed method.
First, we show the type II error (false negative rate)
of the dip test as a function of $\alpha$.  We use a version of the test implemented in the R package
{\em diptest}. 
We take
$P=\frac{1}{2}\mathcal{N}(0,1)+\frac{1}{2}\mathcal{N}(4,1)$.  For a
range of values for $n$, we draw $n$ samples from the mixture $10000$
times.
The left plot in Figure \ref{fig::example} shows the fraction of
times the dip test failed to detect multimodality at the specified
values for $\alpha$.  The increase in the sample size required for a
certain power appears to be at most logarithmic in $1/\alpha$.


We show the overall error rate of the support estimation procedure
in the middle plot in Figure \ref{fig::example} for the following multivariate
distribution.  For given values of $d$ and $s$, we use the Gaussian
mixture
$\frac{1}{2}\mathcal{N}(0,I)+\frac{1}{2}\mathcal{N}(4\mu_{s,d},I)$,
where $\mu_{s,d}\in\mathbb{R}^d$ contains $s$ ones followed by $d-s$
zeroes, so that the true support is $S=\{1,\ldots,s\}$.  
The plot shows the fraction of times the estimated
support $\widehat{S}$ did {\em not} exactly recover $S$ in $50$
replications of the experiment for each combination of parameters.  We
set $\alpha=0.1$ (and $\widetilde{\alpha}=\alpha/(nd)$).  All the
errors were due to incorrectly removing one of the multimodal
dimensions -- in other words, in every single instance it was the case
that $\widehat{S}\subseteq S$.  
This is not surprising since the dip test can be conservative.


Finally, we apply the full method to a $d=20$ dimensional data set
distributed in the first two dimensions according to the Gaussian
mixture
$$
\frac{2}{8}\mathcal{N}\left(\left(\begin{array}{c}0\\0\end{array}\right), 
\left(\begin{array}{cc}0.3&0.3\\0.3&2\end{array}\right)\right) + 
\frac{3}{8}\mathcal{N}\left(\left(\begin{array}{c}3\\0\end{array}\right), 
\left(\begin{array}{cc}0.6&-0.4\\-0.4&1\end{array}\right)\right) + 
\frac{3}{8}\mathcal{N}\left(\left(\begin{array}{c}0\\5\end{array}\right), 
\left(\begin{array}{cc}0.45&0.45\\0.45&1.6\end{array}\right)\right),
$$ 
and according to independent standard Gaussians in the remaining
$d-s=18$ dimensions.  We sample $n=1000$ points, and correctly
recover the multimodal features using $\alpha=0.1$.  The results of
the subsequent mean shift clustering using $h=0.06$ are shown in
Figure \ref{fig::example}, along with contours of the true density.

\section{Conclusion}
\vspace{-0.1in}
We have proposed a new method for
feature selection in high-dimensinal clustering problems.
We have given bounds on the error rate
in terms of clustering loss and Hausdorff distance.
In future work, we will address the following issues:
\begin{enum}
\item The marginal signature assumption (A4) is quite strong.
We do not know of any feature selection method for clustering
that can succeed without some assumption like this.
Either relaxing the assumption or proving that it is necessary
is a top priority.
\item The bounds on clustering loss can probably be improved.
This involves a careful study of the properties of the flow near cluster boundaries.
\item We conjecture that the Hausdorff bound is minimax.
We think this can be proved using techniques like those in
Romano (1988).
\end{enum}




\section*{Acknowledgements}
This research is supported in part by NSF awards IIS-1116458 and CAREER IIS-1252412.


\subsubsection*{References}

\small{
[1] Arias-Castro, Mason, Pelletier (2013).
On the estimation of the gradient lines of a density
and the consistency of the mean-shift algorithm.
Manuscript.

[2] Audibert, Jean-Yves, and Alexandre B. Tsybakov. (2007).
Fast learning rates for plug-in classifiers. 
{\em The Annals of Statistics}, 35, 608-633.

[3] Chacon, J. (2012).
Clusters and water flows: a novel approach to modal clustering through Morse theory.
arxiv:1212.1384.

[4] Chan, Yao-ban, and Peter Hall. (2010).
Using evidence of mixed populations to select variables for clustering very high-dimensional data. 
{\em Journal of the American Statistical Association}, 105, 798-809.

[5] Cheng, Yizong. (1995).
Mean shift, mode seeking, and clustering. 
{\em IEEE Transactions on Pattern Analysis and Machine Intelligence},  
17, 790-799.

[6] Comaniciu, Dorin, and Peter Meer. (2002).
Mean shift: A robust approach toward feature space analysis. 
{\em IEEE Transactions onPattern Analysis and Machine Intelligence}, 24, 603-619.

[7] Einbeck, Jochen. (2011).
Bandwidth selection for mean-shift based unsupervised learning techniques: a unified approach via self-coverage.
Journal of pattern recognition research. 6, 175-192.

[8] Fan, Jianqing, and Jinchi Lv. (2008).
Sure independence screening for ultrahigh dimensional feature space. 
{\em Journal of the Royal Statistical Society: Series B}, 70, 849-911.

[9] Guo, Jian, et al. (2010).
Pairwise Variable Selection for 
High-Dimensional Model-Based Clustering. 
{\em Biometrics}, 66, 793-804.

[10] Hartigan, John A., and P. M. Hartigan. (1985).
The dip test of unimodality. 
{\em The Annals of Statistics}, 13, 70-84.

[11] Muller, Dietrich Werner, and Gunther Sawitzki. (1991).
Excess mass estimates and tests for multimodality. 
{\em Journal of the American Statistical Association},  86, 738-746.

[12] Pan, W. and Shen, X. (2007).
Penalized model-based clustering with application to variable selection.
{\em The Journal of Machine Learning Research}, 8, 1145-1164.

[13] Raftery, Adrian E., and Nema Dean. (2006).
Variable selection for model-based clustering. 
{\em Journal of the American Statistical Association}, 101, 168-178.

[14] Romano, J. (1988).
On weak convergence and optimality of kernel density estimates of the mode. 
{\em The Annals of Statistics} 16, 629-647.

[15] Sriperumbudur, Bharath K., et al. (2009).
Kernel Choice and Classifiability for RKHS Embeddings of Probability Distributions. NIPS.

[16] Sun, W., Wang, J. and Fang, Y. (2012).
Regularized k-means clustering of high-dimensional data and its asymptotic consistency.
{\em Electronic Journal of Statistics}, 6, 148-167.

[17] Wand, M. P., Duong, T. and Chacon, J. (2011). 
Asymptotics for general multivariate kernel density derivative estimators. 
{\em Statistica Sinica}, 21, 807-840.

[18] Witten, Daniela M., and Robert Tibshirani. (2010).
A framework for feature selection in clustering. 
{\em Journal of the American Statistical Association},  105, 713-726.

\newpage

\begin{center}
{\bf Appendix}
\end{center}

{\bf Proof of Lemma \ref{lemma::prop1}.}
Let $g$ and $H$ be the gradient and Hessian of $p$ and let
$\tilde g$ and $\tilde H$ be the gradient and Hessian of $\tilde p$.
Let $m$ be a mode of $p$ and let
$B = B(m,\epsilon)$ be a closed ball around $m$ where $\epsilon = \sqrt{8\eta_0}$.
The ball excludes any other mode of $p$ since
$\sqrt{8\eta_0} < a$.
Expanding $p$ at $x\in B$ we have
\begin{equation}\label{eq::Taylor}
p(x) = p(m) + \frac{1}{2} (x-m)^T H(m) (x-m) + R(x)
\end{equation}
where
$|R(x)| \leq \kappa_3 ||x-m||^3/6$.

Since $\tilde p$ is bounded and continuous,
it has at least one maximizer $\tilde m$ over $B$.
We now show that $\tilde m$ must be in the interior of $B$.
Let $0 < \alpha < \beta < 1$ and 
write
$B = A_0 \bigcup A_1 \bigcup A_2$ where
$A_0 = \{x:\ ||x-m|| \leq \alpha\epsilon\}$,
$A_1 = \{x:\ \alpha\epsilon < ||x-m|| \leq \beta\epsilon\}$,
$A_2 = \{x:\ \beta\epsilon < ||x-m|| \leq \epsilon\}$.
For any $x\in A_0$, by (\ref{eq::Taylor}),
$$
\tilde p(x) \geq 
p(x) -\eta_0 \geq
-\frac{B}{2}\alpha^2 \epsilon^2 - \frac{\kappa_3 \alpha^3\epsilon^3}{6} - \eta_0.
$$
For any $x\in A_2$,
$$
\tilde p(x) \leq 
p(x) +\eta_0 \leq
-\frac{b}{2}\beta^2 \epsilon^2 + \frac{\kappa_3 \beta^3\epsilon^3}{6} + \eta_0.
$$
Then if
\begin{equation}\label{eq::alphabeta}
\frac{\epsilon^2}{2}[b \beta^2 - B \alpha^2] -
\frac{\kappa_3 \epsilon^3 (\alpha^3 + \beta^3)}{6} > 2\eta_0
\end{equation}
we will be able to conclude that
$$
\inf_{x\in A_0}\tilde p(x) > \sup_{x\in A_2}\tilde p(x).
$$
Choose $\alpha$ and $\beta$ to satisfy
$(\alpha/\beta) = \sqrt{b/B}$ and
$\kappa_3 \epsilon(\alpha^3+\beta^3)/6 < 1/4$.
It follows that
(\ref{eq::alphabeta}) holds
and so
$\inf_{x\in A_0}\tilde p(x) > \sup_{x\in A_2}\tilde p(x)$.
Hence,
any maximizer of $\tilde p$ in $B$
is in $A_0$ and hence is interior to $B$.
It follows that
$\tilde g(\tilde m) = (0,\ldots, 0)^T$.
Also,
$$
\lambda_1(\tilde H(\tilde m)) \leq
\lambda_1(H(\tilde m)) \leq - b + \eta_2 < - b/2
$$
since $\eta_2 < b/2$.
Hence,
$\tilde p$
has a local mode $\tilde m$ in the interior of $B$ with zero gradient and
negative definite Hessian.

Now we show that
$\tilde m$ is unique.
Suppose $\tilde p$ has two modes
$x$ and $y$ in the interior of $B$.
Recall that the exact Taylor expansion of a vector-valued function $f$ is
$f(a+t) = f(a)+ t^T \int_0^1 D f(a+ut)du$.
So,
$$
(0,\ldots, 0)^T =
\tilde g(x) - \tilde g(y)=
(y-x)^T\int_0^1 \tilde H(x + u(y-x)) du.
$$
Multiple both sides by $y-x$ and conclude that
\begin{align*}
0 &=
\int_0^1 (y-x)^T \tilde H(x + u(y-x)) (y-x) du \leq
||y-x||\sup_u \lambda_1(\tilde H(x + u(y-x)))\\
& \leq
||y-x||\sup_u [\lambda_1(H(x + u(y-x))) + \eta_2]\\
& \leq
||y-x||\ [- b + \eta_2] < - \frac{ b ||y-x||}{2}
\end{align*}
which is a contradiction.

Now we show that
$\tilde p$ has no other modes.
Let
$B_j = B(m,\epsilon_j)$ 
and suppose that
$\tilde p$ has a local mode at
$x\in \left(\bigcup_{j=1}^k B(x_j,\epsilon)\right)^c$.
By a symmetric argument to the one above,
$p$ also must have a local mode
in $B(x,\epsilon)$.
This contradicts the fact that
$m_1,\ldots, m_k$ are the unique modes of $p$. $\Box$

\vspace{1cm}

{\bf Proof Outline for Lemma \ref{lemma::boundary}.}
By assumption, $p(x)$ can be approximated by a quadratic
in a neighborhood of $m$.
Specifically,
we have that
$p(x) = p(m) - (1/2) (x-m)^T H (x-m) + R$
where $H = H(m)$ and
$|R| \leq ||x-m||^3 \kappa_3/6$.
There exists $c_1$ such that,
if
$||x-m|| \leq c_1$
then
$||x-m||^3 \kappa_3/6$ is much smaller than
$B ||x-m||^2/2$ and hence
the quadratic approximation 
$p(x) \approx p(m) - (1/2) (x-m)^T H (x-m)$
is accurate.

Case 1: $||x_0-m|| \leq c_1$.
In this case, the proof of Lemma 5 of Arias-Castro et al shows that
$\pi(t) - m = e^{tH}(x_0-m) + \xi$
where
$\xi = O(||x-m||^3 \kappa_3/6)$ and so
$\pi(t) - m \approx e^{tH}(x_0-m)$.
In particular,
$\pi(t_\epsilon) - m \approx e^{t_\epsilon H}(x_0-m)$ and thus
$$
\sqrt{\epsilon} \approx ||e^{t_\epsilon H}||\ ||x_0-m|| \leq e^{-b t_\epsilon} ||x_0-m||
$$
so that
$e^{-b t_\epsilon} \geq \frac{\sqrt{\epsilon}}{||x_0-m||}$
and so
$$
t_\epsilon \leq
\frac{\frac{1}{2}\log(1/\epsilon) + \log ||x_0-m||}{b} \leq
\frac{C_6}{||g(x_0)||} +
\frac{\frac{1}{2}\log(1/\epsilon) + \log ||x_0-m||}{b}.
$$

Case 2: $||x_0-m|| > c_1$.
In this case, the starting point $x_0$
is not in the quadratic zone.
There exists $t_1<\infty$ 
(not depending on $\epsilon$)
such that
$||\pi(t_1)-m||\leq c_1$.
Let us first bound $t_1$.
Since 
$\pi'(t) = g(\pi(t))$
we have
$\pi(t) = \int_0^t g(\pi(s)) ds + x_0$ and thus
$\pi(t_1) - x_0 = \int_0^{t_1} g(\pi(s)) ds.$
Now
$g(x) = g(x_0) + H_s(x-x_0)$ 
where $H_s$
is the Hessian evaluated at some point between $x_0$ and $\pi(s)$.
Thus,
$\pi(t_1) - x_0 = t_1 g(x_0) + 
\int_0^{t_1} H_s(x(s)-x_0) ds$
and therefore
$t_1 g(x_0) = 
\pi(t_1) - x_0 - \int_0^{t_1} H_s(\pi(s)-x_0) ds.$
It follows that
$$
t_1 ||g(x_0)|| \leq
||\pi(t_1) - x_0|| + \int_0^{t_1} ||H(\pi(s)-x_0)|| ds \leq
||m - x_0|| + \int_0^{t_1} ||H(\pi(s)-x_0)|| ds 
$$
and
$$
t_1 \leq \frac{||m - x_0|| + \int_0^{t_1} ||H(\pi(s)-x_0)|| ds }{||g(x_0)||} \equiv
\frac{C_6}{||g(x_0)||}.
$$
Now consider the flow $\tilde \pi$ starting at $x_1$.
This is the same as the original flow except starting at $x_1$ rather than $x_0$.
There exists $\tilde t_\epsilon$ on this flow such that
$t_\epsilon = t_1 + \tilde t_\epsilon$.
Applying case 1,
$$
\tilde t_\epsilon \leq
\frac{\frac{1}{2}\log(1/\epsilon) + \log ||x_1-m||}{b} \leq
\frac{\frac{1}{2}\log(1/\epsilon) + \log ||x_0-m||}{b}.
$$
Thus,
$$
t_\epsilon = 
t_1 + \tilde t_\epsilon \leq
\frac{C_6}{||g(x_0)||} +
\frac{\frac{1}{2}\log(1/\epsilon) + \log ||x_0-m||}{b}.\ \ \ \Box
$$

\end{document}